\newcommand{\cal}{\mathcal}
\newcommand{\cN}{\mathcal{N}}
\newcommand{\dist}{{\rm dist}\,}
\newcommand{\rd}{{\mathbb R}^d}
\newcommand{\R}{{\mathbb R}}
\newcommand{\N}{{\mathbb N}}
\newcommand{\diam}{{\rm diam}\,}
\newcommand{\Reg}{{\rm Reg}}
\newcommand{\Ha}{{\cal H}}
\renewcommand{\searrow}{\to}
\newcommand{\var}{\textnormal{var}} 
\renewcommand{\S}{\mathbf{S}}
\newcommand{\bd}[1]{\partial #1} 
\newcommand{\eps}{\varepsilon}
\newcommand{\card}{\#}
\newcommand{\ind}[1]{\mathbf{1}_{#1}}
\newcommand{\nsubset}{\hspace{1ex}/\hspace{-2ex}\subset}
\newcommand{\mydot}{\,\cdot\,}
\newcommand{\wlim}[1]{\underset{#1}{\rm{wlim}}\,}
\newcommand{\esswlim}[1]{\underset{#1}{\rm{esswlim}}\,}
\newcommand{\esslim}[1]{\underset{#1}{\rm{esslim}}\,}
\newtheorem{thm}{Theorem}
\newtheorem{lem}[thm]{Lemma}
\newtheorem{cor}[thm]{Corollary}
\newtheorem{rem}[thm]{Remark}
\numberwithin{equation}{section} \numberwithin{thm}{section}
\begin{document}
\title{Fractal curvature measures of self-similar sets}
\author{Steffen Winter}
\address{Karlsruhe Institute of Technology, Department of Mathematics, 76133 Karlsruhe, Germany}
\author{Martina Z\"ahle}
\address{University of Jena, Mathematical Institute, 07737 Jena, Germany}

\date{\today}
\subjclass[2000]{Primary 28A75, 28A80; Secondary 28A78, 53C65}
\keywords{self-similar set, parallel set, curvature measures, Minkowski content, Minkowski dimension}
\begin{abstract}
Fractal Lipschitz-Killing curvature measures $C^f_k(F,\mydot)$, $k= 0,\ldots,$ $d$, are determined for a large class of self-similar sets $F$ in $\rd$. They arise as weak limits of the appropriately rescaled classical Lipschitz-Killing curvature measures $C_k(F_\eps,\cdot)$ from geometric measure theory of parallel sets $F_\eps$ for small distances $\eps$. Due to self-similarity the limit measures appear to be constant multiples of the normalized Hausdorff measures on $F$, and the constants agree with the corresponding total fractal curvatures $C_k^f(F)$. This provides information on the 'second order' geometric fine structure of such fractals.
\end{abstract}
\maketitle

\section{Introduction}

Classical Lipschitz-Killing curvatures are well-known from convex geometry (Min\-kow\-ski's quermassintegrals for convex bodies), differential geometry
(integrals of mean curvatures for compact submanifolds with boundary) and geometric measure theory (Federer's curvature measures for sets of positive reach and additive extensions). They are intrinsically determined (cf. \cite{CMS84}, \cite{BK00}) and form a complete system of certain Euclidean invariants (see \cite{Za90}).\\
Fractal counterparts  have first been introduced in \cite{winter} for self-similar sets with polyconvex $\eps$-neighborhoods. Extensions to the random case with rather general parallel sets may be found in \cite{Za09}. Whereas in the latter paper only global curvatures are treated, the former contains also local refinements to fractal curvature measures. The aim of the present work is to introduce such measures for the general deterministic case from \cite{Za09}. As before we approximate the compact (fractal) sets $K$ by their $\eps$-neighborhoods
$$K_\eps:=\{x\in\rd: \dist(x,K)\le\eps\}\, .$$
We denote the {\it closure of the complement} of a compact set $K$ by $\widetilde{K}$.
A distance $\eps\ge 0$ is called {\it regular} for the set $K$ if $\widetilde{K_\eps}$ has positive reach in the sense of Federer \cite{F59} and the boundary $\partial K_\eps$ is a Lipschitz manifold. In view of Fu \cite{Fu85}, in space dimensions $d\le 3$ this is fulfilled for Lebesgue almost all $\eps$. (For general $d$, a sufficient condition for this property is that $\eps$ is a regular value of the distance function of $K$ in the sense of Morse theory, cf.~\cite{Fu85}.) For regular $\eps$ the {\it Lipschitz-Killing curvature measures} of order $k$ are determined by means of Federer's versions for sets of positive reach:
\begin{equation}
C_k(K_\eps,\mydot):=(-1)^{d-1-k}C_k(\widetilde{K_\eps},\mydot)\, ,~~k=0,\ldots,d-1\, ,
\end{equation}
where the surface area ($k=d-1$) is included and the volume measure $C_d(K_\eps,\mydot)$ is added for completeness. For more details and some background on classical singular curvature theory we refer to \cite{Za09} and \cite{winter}.\\
The {\it total curvatures} of $K_\eps$ are denoted by
\begin{equation}
C_k(K_\eps):=C_k(K_\eps,\rd)\, ,~~ k=0,\ldots ,d\, .
\end{equation}
We recall now the main properties of curvature measures required for our purposes:
By an associated Gauss-Bonnet theorem the {\it Gauss curvature}
$C_0(K_\eps)$ coincides with the {\it Euler-Poincar\'{e} characteristic} $\chi(K_\eps)$.\\
The curvature measures are {\it motion invariant}, i.e.,
\begin{equation}
C_k(g(K_\eps),g(\mydot))=C_k(K_\eps,\mydot)~~\mbox{for any Euclidean motion}~g\, ,
\end{equation}
the $k$-th measure is {\it homogeneous of degree} $k$, i.e.,
\begin{equation}
C_k(\lambda K_\eps,\lambda \mydot)=\lambda^k\, C_k(K_\eps,\mydot)\, ,~~\lambda>0\, ,
\end{equation}
and they are {\it locally determined}, i.e.,
\begin{equation}
C_k(K_\eps,\mydot\cap G)=C_k(K'_{\eps '},\mydot\cap G)
\end{equation}
for any open set $G\subset\rd$ such that $K_\eps\cap G=K'_{\eps '}\cap G$, where $K_\eps$ and $K'_{\eps '}$ are
both parallel sets such that the closures of their complements have positive reach.

Finally, for sufficiently large distances the parallel sets are always regular and the curvature measures may be estimated by those of a ball of almost the same size: For any compact set $K\subset\R^d$ and any $\eps>R>\sqrt{2}\,\diam K$ we have
\begin{align} \label{eq:R-big}
 C^\var_k(K_\eps)&\le c_k(K,R)\, \eps^k\,,
\end{align}
for some constant $c_k(K,R)$ independent of $\eps$, see \cite[Thm.~4.1]{Za09}). Here $C^\var_k(K_\eps,\mydot)$ denotes the total variation measure of $C_k(K_r,\mydot)$ and $C^\var_k(K_r):=C^\var_k(K_r,\R^d)$ its total mass.\\

In the present paper we consider self-similar sets $F$ in  $\rd$ with Hausdorff dimension $D$ satisfying the open set condition. Under some regularity condition on their $\eps$-neighborhoods we prove (Theorem~\ref{mainthm}) that the rescaled curvature measures $\eps^{D-k}C_k(F_\eps,\mydot)$ in a Ces\'{a}ro average weakly converge to some fractal limit measure $C_k^f(F,\cdot)$ as $\eps\rightarrow 0$. Due to the self-similarity the limit is a constant multiple of the normalized $D$-dimensional Hausdorff measure on $F$ and the constant agrees with the corresponding limit for the $k$th total curvatures from \cite{winter} and \cite{Za09}. If the contraction ratios of the similarities generating $F$ are non-arithmetic, the same result holds true for non-averaged essential limits as $\eps\rightarrow 0$. The constants, i.e. the total fractal curvatures $C_k^f(F)$, can be calculated in terms of integrals of the curvatures measures for small $\eps$-neighborhoods. Explicit numerical values for special examples may be found in \cite{winter}. Moreover, this paper also contains a discussion concerning the correct choice of the scaling exponent - in case the limit vanishes the exponent $D-k$ is not always appropriate.

\section{Main results}

For $N\in\N$ and $i=1,\ldots,N$, let $S_i:\R^d\to\R^d$ be a contracting similarity with contraction ratio $0<r_i<1$.
Let $F\subset\R^d$ be the \emph{self-similar set} generated by the function system $\{S_1,\ldots, S_N\}$. That is, $F$ is the unique nonempty, compact set invariant under the set mapping $\S (\mydot):=\bigcup_i S_i(\mydot)$, cf.~\cite{Hut81}. The set $F$ (or, more precisely, the system $\{S_1,\ldots,S_N\}$) is said to satisfy the \emph{open set
condition} (OSC) if there exists a non-empty, open and bounded
subset $O$ of $\R^d$
 such that
 $$
 \bigcup _i S_i O \subseteq O \quad \text{ and } \quad S_i O \cap
S_j O=\emptyset  \text{ for } i\neq j\,.
$$
The \emph{strong open set condition} (SOSC) holds for $F$ (or $\{S_1,\ldots,S_N\}$), if there exist a set $O$ as in the OSC which additionally satisfies $O\cap F\neq\emptyset$.
It is well known that in $\R^d$ OSC and SOSC are equivalent, cf.~\cite{schief}, i.e., for $F$ satisfying OSC, there exists always such a set $O$ with $O\cap F\neq\emptyset$.

The unique solution $s=D$ of the equation $\sum_{i=1}^N r_i^s=1$ is called the \emph{similarity dimension} of $F$.  It is well known that for self-similar sets $F$ satisfying OSC, $D$ coincides with Minkowski and Hausdorff dimension of $F$.
Further, a self-similar set $F$ is called \emph{arithmetic} (or \emph{lattice}), if there exists some number $h>0$ such
that $-\ln r_i \in h\mathbb{Z}$ for $i=1,\ldots, N$, i.e.\ if $\{-\ln r_1,\ldots,-\ln r_N\}$ generates a discrete subgroup of $\R$.
Otherwise $F$ is called \emph{non-arithmetic} (or \emph{non-lattice}).

Let $\Sigma^*:=\bigcup_{j=0}^\infty\{1,\ldots,N\}^j$ be set of all finite words over the alphabet $\{1,\ldots,N\}$ including the emtpy word. For $\omega=\omega_1\ldots \omega_n\in\Sigma^*$ we denote by $|\omega|$ the \emph{length of $\omega$} (i.e., $|\omega|=n$) and by $\omega|k:=\omega_1\ldots \omega_k$ the subword of the first $k\le n$ letters. We abbreviate $r_\omega:=r_{\omega_1}\ldots r_{\omega_n}$ and  $S_\omega:=S_{\omega_1}\circ\ldots\circ S_{\omega_n}$. Furthermore, let $r_{\min}:=\min\{r_i: 1\le i\le N\}$.

For a given self-similar set $F$, we
fix some constant $R=R(F)$ such that
\begin{equation}\label{eq:R}
R>\sqrt{2}\, \diam F
\end{equation}
(to be able to apply \eqref{eq:R-big}) and some open set $O=O(F)$ satisfying SOSC. Note that the choice of $R$ and the set $O$ are completely arbitrary. We fix both of them, because many of the sets and constants defined below depend on this choice.
For $0<\eps\le R$, let $\Sigma(\eps)$ be the family of all finite words $\omega=\omega_1\ldots \omega_n\in\Sigma^*$ such that
\begin{equation} \label{Sigma}
	Rr_\omega< \eps \le Rr_{\omega||\omega|-1},
\end{equation}
and let
\begin{equation} \label{Sigma_b}
	\Sigma_b(\eps):=\{\omega\in\Sigma(\eps): (S_\omega F)_\eps\cap (\S O)^c_\eps \neq \emptyset\}.
\end{equation}
The words $\omega $ in $\Sigma(\eps)$ describe those cylinder sets $S_\omega  F$ which are approximately of size $\eps$ and the words in $\Sigma_b(\eps)$ only those which are also $2\eps$-close to the boundary of the set $\S O$, the first iterate of the set $O$ under the set mapping $\S =\bigcup_{i=1}^N S_i$. Note that the family $\{S_\omega F: \omega\in\Sigma(\eps)\}$ is a covering of $F$ for each $\eps$, which is optimal in that none of the sets can be removed. It is an easy consequence of the equation $\sum_{i=1}^N r_i^D=1$ that, for each $\eps\in(0,R]$,
\begin{equation} \label{Sigma-eqn}
	\sum_{\omega\in\Sigma(\eps)} r_\omega^D=1\,.
\end{equation}
For convenience, we set $C_k(K_\eps,\mydot):=0$ whenever this measure is not defined otherwise, i.e., for non-regular $\eps$ and $k\le d-2$. (For $k=d-1$ and $k=d$, these measures can always be interpreted in terms of surface area and volume, respectively.)

The most general known global results on the limiting behaviour of curvature measures have been obtained in \cite{Za09} including random self-similar sets. We recall this result for the special case of deterministic sets.

\begin{thm} \cite[Theorem~2.3.8 and Corollary~2.3.9]{Za09} \label{thm:global}
Let $k\in\{0,1,\ldots,d\}$ and $F$ be a self-similar set in $\R^d$ satisfying OSC and the following two conditions.
\begin{enumerate}
\item[(i)] If $d\ge 4$ and $k\le d-2$, then almost all $\eps\in(0,R)$ are regular
for $F$. 
\item[(ii)] If $k\le d-2$, there is a constant $c_k$ such that for almost all $\eps\in(0,R)$ and all $\sigma\in \Sigma_b(\eps)$
\begin{equation}\label{eq:cond-ii}
C_k^\var\left(F_\eps, \bd (S_\sigma F)_\eps \cap \bd \bigcup_{\sigma'\in\Sigma(\eps)\setminus\{\sigma\}}(S_{\sigma'}F)_\eps\right)\le c_k\eps^k.
\end{equation}
\end{enumerate}
Set
\begin{equation} \label{Rk-def}
R_k(\eps) := C_k(F_\eps)-\sum_{i=1}^N \ind{(0,r_i]}(\eps) C_k((S_i F)_\eps),\quad \eps>0.
\end{equation}
Then
\begin{equation}\label{eq:global:lim}
C_k^f(F):=\lim_{\delta\searrow 0} \frac 1{|\ln\delta|}\int_\delta^1\eps^{D-k} C_k(F_\eps)\frac{d\eps}\eps =\frac 1{\eta} \int_0^R r^{D-k-1}R_k(r) dr,
\end{equation}
where $\eta=-\sum_{i=1}^N r_i^D \ln r_i$.
Moreover, if $F$ is non-arithmetic,
then
\begin{equation}\label{eq:global:esslim}
\esslim{\eps\searrow 0} \eps^{D-k} C_k(F_\eps)=C_k^f(F).
\end{equation}
\end{thm}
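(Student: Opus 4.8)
The plan is to derive and solve a renewal equation. First I would make the self-similar scaling explicit: since $S_i$ is a similarity of ratio $r_i$ one has $(S_iF)_\eps=S_i(F_{\eps/r_i})$, and by motion invariance together with the homogeneity of degree $k$ of the curvature measures, $C_k((S_iF)_\eps)=r_i^k\,C_k(F_{\eps/r_i})$. Substituting this into the definition \eqref{Rk-def} of $R_k$ gives, for every $\eps>0$,
\begin{equation*}
 C_k(F_\eps)=R_k(\eps)+\sum_{i=1}^N \ind{(0,r_i]}(\eps)\,r_i^k\,C_k(F_{\eps/r_i}).
\end{equation*}
Multiplying by $\eps^{D-k}$ and writing $z(\eps):=\eps^{D-k}C_k(F_\eps)$, the factor $\eps^{D-k}r_i^k$ in the $i$-th summand becomes $r_i^{D}(\eps/r_i)^{D-k}$, so that
\begin{equation*}
 z(\eps)=\eps^{D-k}R_k(\eps)+\sum_{i=1}^N \ind{(0,r_i]}(\eps)\,r_i^{D}\, z(\eps/r_i).
\end{equation*}

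Next I would pass to additive form via the substitution $\eps=e^{-t}$, setting $\zeta(t):=z(e^{-t})$ and $g(t):=e^{-(D-k)t}R_k(e^{-t})$. With $T_i:=-\ln r_i>0$ the constraint $\eps\le r_i$ reads $t\ge T_i$ and $z(\eps/r_i)=\zeta(t-T_i)$, whence
\begin{equation*}
 \zeta(t)=g(t)+\sum_{i=1}^N \ind{[T_i,\infty)}(t)\,r_i^{D}\,\zeta(t-T_i),
\end{equation*}
i.e. $\zeta=g+\mu*\zeta$ for the measure $\mu:=\sum_{i=1}^N r_i^{D}\,\delta_{T_i}$. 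Because $\sum_i r_i^{D}=1$ by the defining equation of $D$, $\mu$ is a \emph{probability} measure on $(0,\infty)$ of mean $\int t\,d\mu(t)=-\sum_i r_i^{D}\ln r_i=\eta$, and it is non-lattice exactly when $F$ is non-arithmetic. The bounded solution is $\zeta=\nu*g$ with renewal measure $\nu=\sum_{n\ge0}\mu^{*n}$.

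Now I would invoke the Renewal Theorem. Its Ces\`aro-averaged form, valid in both the lattice and the non-lattice case, yields that $\frac1T\int_0^T\zeta(t)\,dt\to\frac{1}{\eta}\int_{\R}g(t)\,dt$; since $\frac{1}{|\ln\delta|}\int_\delta^1 z(\eps)\,\frac{d\eps}{\eps}$ is precisely such an average (with $T=|\ln\delta|$), and back-substitution together with truncation at the fixed scale $R$ turns $\int g\,dt$ into $\int_0^R\eps^{D-k-1}R_k(\eps)\,d\eps$, this gives \eqref{eq:global:lim}. If $F$ is non-arithmetic, $\mu$ is non-lattice and the sharp (Blackwell/key) renewal theorem applies directly to $\zeta=\nu*g$, producing $\lim_{t\to\infty}\zeta(t)=\frac{1}{\eta}\int_{\R}g$ and hence the genuine (essential) limit \eqref{eq:global:esslim}; the "essential" is forced because $\zeta$ is only defined off the null set of non-regular $\eps$.

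The real work, and the place where hypotheses (i) and (ii) enter, is to verify that $g$ is admissible for the Renewal Theorem, that is, bounded and directly Riemann integrable. Two obstacles arise. First, for non-regular $\eps$ the measure $C_k(F_\eps)$ is undefined (set to $0$), so condition (i), guaranteeing regularity for almost all $\eps$, is needed to make $g$ well defined a.e. Second, and this is the crux, as $\eps\to0$ all indicators in $R_k$ are active, so $R_k(\eps)=C_k(F_\eps)-\sum_i C_k((S_iF)_\eps)$ is a difference of curvatures of the overlapping neighbourhoods $(S_iF)_\eps$ covering $F_\eps$; by additivity and the local determination property its mass is concentrated where distinct cylinder neighbourhoods meet, i.e. near $\partial(\S O)$, and bounding the curvature there by $c_k\eps^k$ is exactly the content of \eqref{eq:cond-ii}. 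This forces $|R_k(\eps)|\le C\eps^{k}$, hence integrability of $g$ as $t\to\infty$ ($\eps\to0$). On the fixed interval where only some indicators are active $g$ is merely bounded, which is harmless; the remaining tail $\eps>R$, where $F_\eps$ is only ball-like and one has just the crude estimate \eqref{eq:R-big}, must be cut off at $R$, and controlling this truncation in the non-averaged statement is the most delicate point.
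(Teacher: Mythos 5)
Your overall architecture is exactly that of the actual proof: this theorem is quoted from \cite{Za09} and not reproved in the present paper, and the proof there proceeds, as you propose, via the scaling identity $C_k((S_iF)_\eps)=r_i^k\,C_k(F_{\eps/r_i})$, the renewal equation $\zeta=g+\mu*\zeta$ with $\mu=\sum_i r_i^D\delta_{-\ln r_i}$ a probability measure of mean $\eta$, and the key renewal theorem in its averaged form (valid in both the arithmetic and non-arithmetic cases) and its pointwise form (non-arithmetic case), with essential limits accounting for the null set of non-regular $\eps$. So far this is correct and matches.

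The genuine gap is in the step you yourself identify as the crux: the claim that condition (ii) ``forces $|R_k(\eps)|\le C\eps^k$''. Condition \eqref{eq:cond-ii} is a bound \emph{per cylinder} $\sigma\in\Sigma_b(\eps)$, while the region on which $C_k(F_\eps,\mydot)$ differs from $\sum_i C_k((S_iF)_\eps,\mydot)$ is covered by the boundary-intersection sets of \emph{all} $\sigma\in\Sigma_b(\eps)$. Hence what (ii) actually yields is $|R_k(\eps)|\le \card\Sigma_b(\eps)\cdot c_k\,\eps^k$, and $\card\Sigma_b(\eps)$ is in general unbounded as $\eps\to 0$; it stays bounded essentially only when distinct first-level cylinders meet in finite sets (gasket-like examples). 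For instance, take $F=C\times[0,1]\subset\R^2$ with $C$ the middle-thirds Cantor set ($N=6$, $r_i=1/3$, $D=\log 6/\log 3$), for which (i) is automatic and (ii) can be checked for $k=0$: vertically adjacent first-level cylinders meet along Cantor sets of dimension $\log 2/\log 3$, the overlaps $(S_iF)_\eps\cap(S_jF)_\eps$ have $\asymp\eps^{-\log 2/\log 3}$ connected components, and consequently $|R_0(\eps)|\asymp\eps^{-\log 2/\log 3}$, contradicting your claimed bound $|R_0(\eps)|\le C$. What rescues the argument is a counting lemma, which is precisely where the open set condition enters quantitatively: $\card\Sigma_b(\eps)\le c\,\eps^{\gamma-D}$ for some $\gamma>0$ depending on $O$ (this is \cite[Lemma~5.4.1]{winter}, appearing in the present paper as Lemma~\ref{sigmaBest}). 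Combined with (ii) it gives $|R_k(\eps)|\le c\,\eps^{k-D+\gamma}$, i.e.\ $g(t)\le c\,e^{-\gamma t}$: weaker than what you claimed, but still exponentially decaying, which is all that the direct-Riemann-integrability verification needs. Without this counting step, the verification of the renewal theorem's hypotheses --- the part you correctly call the real work --- does not go through. A secondary inaccuracy: localizing $R_k$ on the overlap region is not a consequence of ``additivity'', since curvature measures of unions are not inclusion-exclusion additive in this generality; the correct argument uses the locality property on suitable open sets, in the manner of Lemma~\ref{curv-est} of the paper.
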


The numbers $C_k^f(F)$ are refered to as the \emph{fractal curvatures} of the set $F$.
For $k=d$, the limits in \eqref{eq:global:lim} and \eqref{eq:global:esslim} specialize to the average Minkowski content and the Minkowski content, respectively, and the result is due to Lapidus and Pomerance \cite{LapPo1}, Falconer \cite{fal93} (for $d=1$) and Gatzouras \cite{gatzouras} (for general $d$). The case $k=d-1$ has been treated in \cite{rw09}. In both cases the essential limits can be replaced by limits and  the limits are always positive. For the special case of polyconvex parallel sets, where the assumptions (i) and (ii) are not needed, see \cite{winter}.

Formula \eqref{eq:global:lim} in Theorem~\ref{thm:global} should in particular be understood to imply that the integral on the right hand side exists and thus the fractal curvatures are finite.
This follows indeed from the proof in \cite{Za09}. It is also directly seen from Theorem~\ref{unifbound} below.
To state it, let $\mathcal{N}\subset(0,R)$ be the set of values $\eps$ which are not regular for $F$. By condition (i) in Theorem~\ref{mainthm}, $\cN$ is a Lebsgue null set. Since critical values of the distance function of a compact set can not be larger than the diameter of the set, cf.~\eqref{eq:R-big}, and thus for $F$ not larger than $R$, it is clear that $\mathcal{N}$ contains all non-regular values of $F$. Furthermore, let $\mathcal{N}'\subset(0,R)$ be the null set for which the estimate \eqref{eq:cond-ii} in condition (ii) of Theorem~\ref{mainthm} does not hold. We set
\begin{equation}\label{eq:Nstar}
\mathcal{N}^*:=\bigcup_{\sigma \in\Sigma^*} r_\sigma (\mathcal{N}\cup\mathcal{N}')  \qquad \text{ and } \quad \Reg(F):=(0,\infty)\setminus \mathcal{N}^*\,.
\end{equation}
Observe that $\mathcal{N}^*\subset(0,R)$ and that it is a Lebesgue null set. Moreover, for each $\eps\in\Reg(F)$, not only $\eps$ but also $r_\sigma^{-1}\eps$ is regular for $F$ for each $\sigma\in\Sigma^*$. Consequently,
$\eps\in\Reg(F)$ is regular for $S_\sigma F$ for each $\sigma\in\Sigma^*$, which follows from the relation $(S_\sigma F)_\eps=S_\sigma(F_{\eps/r_\sigma})$. This implies in particular that the curvature measures $C_k(F_\eps,\mydot)$ and $C_k((S_{\sigma}F)_\eps,\mydot)$ are well defined for each $\eps\in\Reg(F)$ and each $\sigma\in\Sigma^*$.

Recall that $C_k^\var(F_\eps,\mydot)$ is the total variation measure of $C_k(F_\eps,\mydot)$ and $C_k^\var(F_\eps):=C_k^\var(F_\eps,\R^d)$. For $\eps\in\Reg(F)$, not only $C_k(F_\eps)$ but also $C_k^\var(F_\eps)$ is bounded as $\eps\to 0$ when rescaled with $\eps^{D-k}$.

\begin{thm}\label{unifbound}
Let $F$ be a self-similar set in $\R^d$ satisfying the hypotheses of Theorem~\ref{thm:global} and let $k\in\{0,\ldots,d-2\}$.
The expression $\eps^{D-k} C_k^\var(F_\eps)$ is uniformly bounded for $\eps\in\Reg(F)\cap(0,1]$, i.e.\
there is a constant $M$ such that for all $\eps\in\Reg(F)\cap(0,1]$, $\eps^{D-k} C_k^\var(F_\eps)\le M$.
\end{thm}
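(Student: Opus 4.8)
\emph{Sketch of proof.} The plan is to set up a self-similar decomposition of $C_k^\var(F_\eps)$ on the scale of the covering $\Sigma(\eps)$ and to read off boundedness from the two standing hypotheses: \eqref{eq:R-big} for the ``diagonal'' part and condition (ii) for the overlaps. Put $v(\eps):=\eps^{D-k}C_k^\var(F_\eps)$ for $\eps\in\Reg(F)$. First I would iterate the one-step relation $F=\bigcup_i S_iF$, using that $(S_iF)_\eps=S_i(F_{\eps/r_i})$ together with the scaling $C_k^\var((S_iF)_\eps)=r_i^kC_k^\var(F_{\eps/r_i})$ and the locality property: on the open set $\bigl(\bigcup_{j\neq i}(S_jF)_\eps\bigr)^c$ the measures $C_k^\var(F_\eps,\mydot)$ and $C_k^\var((S_iF)_\eps,\mydot)$ coincide, and on $\bd{F_\eps}$ these open sets are essentially disjoint. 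Unfolding this down to the family $\Sigma(\eps)$ (so that each magnified distance $\eps/r_\tau$ lands in the fixed range $(R,R/r_{\min}]$) and multiplying by $\eps^{D-k}$, I expect to reach
\[
v(\eps)\;\le\;\sum_{\tau\in\Sigma(\eps)} r_\tau^{D}\, v(\eps/r_\tau)\;+\;\sum_{\tau} r_\tau^{D}\, w(\eps/r_\tau),
\]
where the second sum runs over the proper prefixes of the words of $\Sigma(\eps)$ and $w(\eps'):=\eps'^{D-k}W(\eps')$ is the rescaled one-step overlap remainder at scale $\eps'$.

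The diagonal sum is harmless: for $\tau\in\Sigma(\eps)$ one has $\eps/r_\tau\in(R,R/r_{\min}]$, so \eqref{eq:R-big} gives $v(\eps/r_\tau)=(\eps/r_\tau)^{D-k}C_k^\var(F_{\eps/r_\tau})\le c_k(F,R)(\eps/r_\tau)^{D}\le c_k(F,R)(R/r_{\min})^{D}$, a bound independent of $\eps$ and $\tau$; by \eqref{Sigma-eqn}, $\sum_{\tau\in\Sigma(\eps)}r_\tau^{D}=1$, so the whole first sum is bounded by a constant. The content of the theorem therefore sits in the reward sum. Here I would use that, by the open set condition, two cylinders $S_\omega F,S_{\omega'}F$ can have touching $\eps$-neighbourhoods only near $\partial(\S O)$; a one-step overlap at scale $\eps'$ is thus supported on $\eps'$-neighbourhoods of the size-$\eps'$ cylinders meeting $\partial(\S O)$, i.e.\ of the words in $\Sigma_b(\eps')$, and its mass is exactly what condition (ii) estimates. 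Summing \eqref{eq:cond-ii} over $\Sigma_b(\eps')$ bounds $W(\eps')$, and the deeper, ``internal'' overlaps are brought back to this boundary situation by applying $S_\tau^{-1}$ along the longest common prefix $\tau$, which rescales such an overlap to a top-level one at scale $\eps/r_\tau$.

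The crux---and the step I expect to be hardest---is to show that the resulting reward sum is uniformly bounded, equivalently that the rescaled remainder is directly Riemann integrable, $\int_0^{R}w(\eps')\,\tfrac{d\eps'}{\eps'}<\infty$. This is the total-variation analogue of the finiteness of $\int_0^{R}r^{D-k-1}R_k(r)\,dr$ in \eqref{eq:global:lim}, and it rests on the same mechanism: condition (ii) controls each boundary cylinder, while the open set condition forces the total mass carried by the boundary words $\Sigma_b(\eps')$ to decay geometrically relative to $\eps'^{-D}$ as the scale refines, so that the contributions $\sum_\tau r_\tau^{D}w(\eps/r_\tau)$ form a convergent (geometric-type) series rather than a logarithmically divergent one. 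Condition (i) and the construction of $\Reg(F)$ via $\cN^*$ in \eqref{eq:Nstar} are exactly what guarantee that every curvature measure entering the decomposition is defined for the scales used. Once the reward is bounded, the displayed inequality yields $v(\eps)\le M$ for all $\eps\in\Reg(F)\cap(0,R]$; for the remaining range $R<\eps\le 1$, if nonempty, \eqref{eq:R-big} applies directly, which completes the assertion.
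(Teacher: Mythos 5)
Your proposal is sound in outline and rests on the same three pillars as the paper's proof---locality plus scaling, condition (ii) for overlaps near the boundary of $\S O$, and the OSC-based decay of the number of boundary words (the paper's Lemma~\ref{sigmaBest}, which is cited from \cite{winter}, not derived ad hoc)---but it organizes them differently. The paper does \emph{not} unfold down to $\Sigma(\eps)$: it decomposes at a \emph{fixed} coarse scale $\tilde r=Rr$, obtains the functional inequality $g(\eps)\le\sum_{\omega\in\Sigma(\tilde r)}r_\omega^D g(\eps/r_\omega)+c\eps^\gamma$ (where the remainder is controlled by Lemma~\ref{key-lem}, i.e.\ by Lemma~\ref{curv-est} combined with the counting bound), and then runs an induction over the intervals $I_n=\Reg(F)\cap(r^n,1]$, with the bounds $M_n=M_1\sum_{j<n}r^{\gamma j}$ forming the convergent geometric series. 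Your full unfolding buys one genuine simplification: since every overlap appears as a \emph{one-step} overlap of $F$ at a magnified scale $\eps/r_\tau$, and the one-step overlap region lies in $\bigl(((\S O)^c)\bigr)_{\eps'}$, all cylinders involved belong to $\Sigma_b(\eps/r_\tau)$, so condition (ii) (via Remark~\ref{rem:cond2}) applies directly; this absorbs the paper's Lemma~\ref{curv-est}, whose whole point is to reduce overlaps of \emph{non}-boundary cylinders to boundary ones by zooming out along the word. Your treatment of the diagonal terms via \eqref{eq:R-big} and \eqref{Sigma-eqn} is correct, since $\eps/r_\tau\in(R,R/r_{\min}]$ for $\tau\in\Sigma(\eps)$.

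Two caveats, both fillable but real. First, the decay $\card\,\Sigma_b(\eps')\le c\,\eps'^{\gamma-D}$ is the engine of the entire proof and is not a soft consequence of OSC; it is precisely Lemma~\ref{sigmaBest} (applied with $r=R$, $\delta=\eps'$, so that $O(R)=\S O$), and your sketch should invoke it explicitly. Second, and more importantly, your ``geometric-type series'' step hides a trap: if you sum the reward level-by-level in the tree and bound the level-$j$ prefix sum by $\bigl(\sum_i r_i^{D-\gamma}\bigr)^j$, the resulting bound \emph{diverges} whenever the $r_i$ are not all equal, because $\sum_i r_i^{D-\gamma}>1$ grows too fast relative to $\eps^\gamma$. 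The correct summation groups the proper prefixes $\tau$ by the magnitude of $r_\tau$ rather than by word length: for each scale window $r_\tau\in[e^{-(m+1)}s,e^{-m}s)$ one has $\sum_\tau r_\tau^D\le K$ with $K$ depending only on $r_{\max}$ (each word of a cut $\Sigma(s')$ has at most $K$ prefixes per window, and the cut weights sum to $1$ by \eqref{Sigma-eqn}), after which $\sum_m K\,c\,(Re^{-m})^\gamma$ is an honest geometric series. This window argument is exactly what the paper's interval induction accomplishes in disguise, so either route closes the gap---but as written, ``convergent geometric-type series'' is an assertion, not a proof, and a naive per-level implementation of it would fail.
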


Note that for $k\in\{d-1,d\}$ the corresponding statement is an obvious consequence of Theorem~\ref{thm:global}, since in these cases the measure $C_k(F_\eps,\mydot)$ is positive and hence the total variation is just the measure itself. The proof of Theorem~\ref{unifbound} is given in Section~\ref{page:proof2}, see page~\pageref{page:proof2}.

Now we want to discuss our main result, the existence of (essential) weak limits of the suitably rescaled curvature measures of the parallel sets $F_\eps$ (as $\eps\searrow 0$) for self-similar sets. Let $k\in\{0,\ldots,d\}$.
Since weak convergence implies the convergence of the total masses of the measures, the measures $C_k(F_\eps,\mydot)$ have to be rescaled with the factor $\eps^{D-k}$ just as their total masses in Theorem~\ref{thm:global}. Therefore, 
for each $\eps\in\Reg(F)$, we define the \emph{$k$-th rescaled curvature measure} $\nu_{k,\eps}$ of $F_\eps$ by
\begin{equation}\label{resc-curv}
\nu_{k,\eps}(\mydot):=\eps^{D-k} C_k(F_\eps,\mydot)\,.
\end{equation}
In general,  the (essential) weak limit of these measures as $\eps\searrow 0$ need not exist. Often already the \emph{total masses}  $\nu_{k,\eps}(\R^d)=\eps^{D-k} C_k(F_\eps)$ fail to converge.
Therefore, and also to avoid taking essential limits, we define averaged versions $\overline{\nu}_{k,\eps}$ of the rescaled curvature measures $\nu_{k,\eps}$: For each $0<\eps<1$, let
\begin{equation}\label{avresc-curv}
\overline{\nu}_{k,\eps}(\mydot):=\frac{1}{|\ln\eps|}\int_\eps^1 \tilde{\eps}^{D-k} C_k(F_{\tilde{\eps}},\mydot)\frac{d\tilde{\eps}}{\tilde{\eps}}\,.
\end{equation}
Note that $\overline{\nu}_{k,\eps}$ is well defined, since the set $\mathcal{N}$ is assumed to be a null set.
For $k=d$ and $k=d-1$, the measures $\nu_{k,\eps}$ and $\overline{\nu}_{k,\eps}$ are positive, while for $k<d-1$, they are totally finite signed measures in general.
In the sequel we want to study the (essential) weak limits of the measures $\nu_{k,\eps}$ and $\overline{\nu}_{k,\eps}$ as $\eps\searrow 0$.
We will write $\wlim{\eps\searrow 0}\mu_\eps$ and $\esswlim{\eps \searrow 0}\mu_\eps$ for the \emph{weak limit} and the \emph{essential weak limit}, respectively, of a family of measures $\{\mu_\eps\}_{\eps\in(0,\eps_0)}$ as $\eps\searrow 0$. Here weak limit as $\eps\searrow 0$ means that the convergence takes place for any null sequence $\{\eps_n\}_{n\in\N}$ and essential weak limit means that there exist a set $\Lambda \subset(0,\eps_0)$ of Lebesgue measure zero such that the weak convergence takes place for any null sequence $\{\eps_n\}_{n\in \N}$ avoiding the set $\Lambda$. It will be clear from the proof that for the essential weak limits below the set $\Lambda$ to be avoided is the set $\mathcal{N}^*$ defined in \eqref{eq:Nstar}.\\
Our {\it main result on the existence and structure of fractal curvature measures} can now be formulated as follows.

\begin{thm}\label{mainthm}
Let $k\in\{0,1,\ldots,d\}$ and let $F$ be a self-similar set in $\R^d$ satisfying OSC and conditions (i) and (ii) of Theorem~\ref{thm:global}.
Then
\begin{equation}\label{eq:mainthm:wlim}
C_k^f(F,\mydot):=\wlim{\eps\searrow 0}\frac{1}{|\ln\eps|}\int_\eps^1 \tilde{\eps}^{D-k} C_k(F_{\tilde{\eps}},\mydot)\frac{d\tilde{\eps}}{\tilde{\eps}}=C_k^f(F)\mu_F,
\end{equation}
where
$\mu_F$ is the normalized $D$-dimensional Hausdorff measure on $F$. Moreover, if $F$ is non-arithmetic,
then
\begin{equation}\label{eq:mainthm:esswlim}
\esswlim{\eps\searrow 0}\eps^{D-k} C_k(F_\eps,\mydot)=C_k^f(F,\mydot)=C_k^f(F)\mu_F.
\end{equation}
\end{thm}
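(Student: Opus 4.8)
The plan is to deduce the measure-valued statement from the scalar statement of Theorem~\ref{thm:global} by combining a compactness/tightness argument with an identification of the limit through the self-similar structure. Since, for $\eps\le 1$, all the measures $\overline{\nu}_{k,\eps}$ are carried by the compact set $F_1$ and have, by Theorem~\ref{unifbound}, uniformly bounded total variation (averaging over $\tilde\eps$ preserves the bound $M$, because the total variation of an average is at most the average of the total variations and $\frac{1}{|\ln\eps|}\int_\eps^1\frac{d\tilde\eps}{\tilde\eps}=1$), the family $\{\overline{\nu}_{k,\eps}\}$ is a bounded subset of the dual of $C(F_1)$ and hence relatively compact in the weak topology of signed Radon measures. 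It therefore suffices to show that $\int g\,d\overline{\nu}_{k,\eps}\to C_k^f(F)\int g\,d\mu_F$ for $g$ ranging over a convergence-determining class, e.g.\ the Lipschitz functions on $\rd$. For $g\equiv 1$ this convergence is exactly \eqref{eq:global:lim}, which in particular fixes the total mass of any limit as $C_k^f(F)$.

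For general $g$ I would set up a measure-valued renewal equation. Writing $(S_iF)_\eps=S_i(F_{\eps/r_i})$ and combining motion invariance with homogeneity of degree $k$ gives $\int g\,dC_k((S_iF)_\eps,\mydot)=r_i^k\int (g\circ S_i)\,dC_k(F_{\eps/r_i},\mydot)$. Localizing $C_k(F_\eps,\mydot)$ to the pairwise disjoint windows $S_iO$ by the locality property, collecting the first-level contributions, and using $\eps^{D-k}r_i^k=r_i^D(\eps/r_i)^{D-k}$, one obtains for $f_g(\eps):=\eps^{D-k}\int g\,dC_k(F_\eps,\mydot)$ the relation $f_g(\eps)=\eps^{D-k}R_k^g(\eps)+\sum_i\ind{(0,r_i]}(\eps)\,r_i^D\,f_{g\circ S_i}(\eps/r_i)$, where $R_k^g$ is the test-function analogue of $R_k$ in \eqref{Rk-def} and the identity holds up to an error supported near the cylinder boundaries. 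Iterating this equation produces a sum over $\sigma\in\Sigma(\eps)$ in which the source is carried by words $\sigma\in\Sigma_b(\eps)$, matching the boundary sets in \eqref{eq:cond-ii}.

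The two features that are genuinely new compared with Theorem~\ref{thm:global} are the test function changing under each iteration ($g\mapsto g\circ S_i$) and the overlap error. For the first, I would use that the cylinders $S_\sigma F$ with $\sigma\in\Sigma(\eps)$ have diameter comparable to $\eps$, so that by uniform continuity $g\circ S_\sigma$ is, after $|\sigma|$ iterations, close to the constant $g(x_\sigma)$ for a reference point $x_\sigma\in S_\sigma F$; this decouples the renewal equation, replacing the scalar source by $g(x_\sigma)$ times the source of Theorem~\ref{thm:global} and turning the limit into $C_k^f(F)\sum_{\sigma\in\Sigma(\eps)}r_\sigma^D g(x_\sigma)$, which converges to $C_k^f(F)\int g\,d\mu_F$ by \eqref{Sigma-eqn}, since $\{S_\sigma F\}_{\sigma\in\Sigma(\eps)}$ is an a.e.\ partition of $F$ with $\mu_F(S_\sigma F)=r_\sigma^D$ and mesh tending to $0$. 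For the second, the localization error feeds into the source term and is concentrated on the boundary sets in \eqref{eq:cond-ii} for $\sigma\in\Sigma_b(\eps)$; condition (ii) bounds its curvature mass by $c_k\eps^k$ per word, and a counting estimate for $\card\Sigma_b(\eps)$ shows that, after rescaling by $\eps^{D-k}$ and averaging, this contribution is negligible. Theorem~\ref{unifbound} guarantees that none of the rescaled pieces blows up while these approximations are carried out.

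With the renewal equation decoupled and the error absorbed into a negligible, directly Riemann integrable source, the problem reduces to the same scalar renewal argument used for Theorem~\ref{thm:global}: the logarithmic (Ces\`aro) average of $f_g$ converges for every admissible $g$ with limit $C_k^f(F)\int g\,d\mu_F$, yielding \eqref{eq:mainthm:wlim}, and in the non-arithmetic case the renewal theorem upgrades this to the essential limit \eqref{eq:mainthm:esswlim}, with exceptional set $\mathcal{N}^*$ as in \eqref{eq:Nstar}. I expect the main obstacle to lie in the rigorous control of the overlap error: making the localization to the windows $S_\sigma O$ precise, quantifying $\card\Sigma_b(\eps)$, and verifying that condition (ii) suppresses the boundary contributions uniformly enough to survive the weak limit, while simultaneously justifying the near-constancy replacement $g\circ S_\sigma\approx g(x_\sigma)$ with errors that vanish uniformly in the renewal iteration.
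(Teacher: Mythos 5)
Your opening moves coincide with the paper's: Theorem~\ref{unifbound} plus the common compact support $F_1$ gives relative weak compactness of the rescaled (averaged) curvature measures, and the task becomes to identify every subsequential limit, with the scalar Theorem~\ref{thm:global} supplying the constant $C_k^f(F)$. But the identification step in the paper is organized quite differently from yours, and your version of it contains a genuine gap. The paper never reruns any renewal argument at the measure level: it tests an arbitrary subsequential limit $\nu_k$ against the intersection-stable generator ${\cal A}_F=\{S_\omega O:\omega\in\Sigma^*\}\cup{\cal C}_F$, proves via the scaling/locality estimate of Lemma~\ref{curvbound2} that $|\nu_{k,\eps}(f^\omega_\delta)-r_\omega^D\,\nu_{k,\eps r_\omega^{-1}}(\R^d)|\le 2c\delta^\gamma$, lets Theorem~\ref{thm:global} give $\nu_{k,\eps r_\omega^{-1}}(\R^d)\to C_k^f(F)$ along the relevant sequences, shows $\nu_k^{\pm}(O(r)^c)=0$ with Lemma~\ref{key-lem}, and concludes by the measure-uniqueness theorem (Theorem~\ref{eindeut}). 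The only dynamical input is the already-proved scalar limit, applied at rescaled arguments $\eps r_\omega^{-1}$ for each fixed word $\omega$; condition (ii) enters only through the localization lemmas.

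The gap in your proposal is the concluding claim that ``the problem reduces to the same scalar renewal argument used for Theorem~\ref{thm:global}.'' It does not, as stated. Your exact relation couples $f_g$ to the \emph{different} functions $f_{g\circ S_i}$, so it is an infinite system of equations indexed by test functions, not a renewal equation for $f_g$; and after your decoupling the object to be analyzed is $\sum_\sigma r_\sigma^D\, g(x_\sigma)\,(\eps/r_\sigma)^{D-k}R_k(\eps/r_\sigma)$, whose word-dependent weights $g(x_\sigma)$ do not factor multiplicatively along words, so it is not the solution of any scalar renewal equation and the renewal theorem cannot be invoked for it. Moreover, the sources of the iteration sit at \emph{all} scales, not only at $\Sigma(\eps)$: for short words $\sigma$ the function $g\circ S_\sigma$ is nowhere near constant, so those source terms cannot be decoupled at all; they must instead be shown individually negligible via a total-variation bound on the source measure of \eqref{Rk-def}, which requires condition \eqref{eq:cond-ii} \emph{together with} a counting estimate $\card\,\Sigma_b(\eps)=O(\eps^{-D+\gamma})$ that this paper never proves and you would have to import from \cite{winter} or \cite{Za09}. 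The natural repair is to stop the iteration at a fixed intermediate scale $\eps_0$, decouple only the sub-trees below the finitely many words $\tau\in\Sigma(\eps_0)$ (where $g\circ S_\tau\approx g(x_\tau)$ by uniform continuity), apply the scalar Theorem~\ref{thm:global} to each rescaled copy $(\eps/r_\tau)^{D-k}C_k(F_{\eps/r_\tau})$, use \eqref{Sigma-eqn} for the Riemann sum, and let $\eps_0\to 0$ at the end (avoiding $\mathcal{N}^*$ from \eqref{eq:Nstar} in the non-arithmetic case). But this repaired argument is exactly the paper's mechanism --- localize to cylinder windows at a fixed level, invoke the scalar limit at rescaled arguments, suppress the boundary overlap --- which the paper implements more economically with the sets $S_\omega O$, Lemmas~\ref{curvbound2} and~\ref{key-lem}, and Theorem~\ref{eindeut}, without any renewal machinery or $\Sigma_b$-counting at the measure level.
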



To verify the curvature bound condition (ii), it is necessary to look at all cylinder sets $S_\sigma F$ close to the ``boundary'' of $\S O$ and to determine for each of these sets $S_\sigma F$ the curvature (of $F_\eps$) in the intersection of the boundaries of $(S_\sigma F)_\eps$ and the union of all other cylinder sets sufficiently close $S_\sigma F$. These intersections are typically very small. Some concrete examples are discussed in \cite{w10}. They show in particular, that the results obtained in \cite{Za09} (for the total fractal curvatures) and here (for the fractal curvature measures) go clearly beyond the polyconvex setting in \cite{winter}.
\begin{rem} \label{rem:cond2}
It is not difficult to see that
\begin{equation} \label{eq:cond2}
(S_\sigma F)_\eps \cap \bigcup_{\sigma'\in\Sigma(\eps)\setminus\{\sigma\}}(S_{\sigma'}F)_\eps \cap \bd F_\eps = \bd(S_\sigma F)_\eps \cap \bd \bigcup_{\sigma'\in\Sigma(\eps)\setminus\{\sigma\}}(S_{\sigma'}F)_\eps \cap \bd F_\eps\, ,
\end{equation}
and therefore, since the curvature measures (of order $k\le d-1$) are concentrated on the boundary of $F_\eps$, condition $(ii)$ can equivalently be formulated with the boundary symbols in \eqref{eq:cond-ii} omitted. Several other equivalent formulations of this condition are presented in \cite{w10},
which illuminate the geometric meaning of condition $(ii)$ and simplify its verification.
\end{rem}

The remainder of the paper is organized as follows. In order to prepare the proof of the main results, we derive a number of estimates in the next section. The proof of Theorem~\ref{unifbound} is given in Section~\ref{page:proof2}, and the proof of Theorem~\ref{mainthm} in Section~\ref{sec:proof}.

\section{Some estimates} \label{sec:estimates}

 The most important result in this section is Lemma~\ref{key-lem}, while the main purpose of the other statements is to prove this lemma.
However, Lemma~\ref{curvBest} will be used again in Section~\ref{page:proof2}.
For $r>0$, let
\[
O(r):=\bigcup_{\sigma\in\Sigma(r)} S_\sigma O.
\]
\begin{lem} \label{key-lem}
Let $k\in\{0,\ldots,d\}$ and let $F$ be a self-similar set in $\rd$ satisfying the hypotheses of Theorem~\ref{mainthm}.
There exist positive constants $\rho, \gamma$ and, for each $r>0$, a positive constant $c=c(r)$ such that for all $\eps\in \Reg(F)$ and all $\delta$ with $0<\eps\le\delta\le \rho r$,
\[
C_k^\var(F_\eps, (O(r)^c)_\delta)\le c \eps^{k-D}\delta^{\gamma}.
\]
\end{lem}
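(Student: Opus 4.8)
The plan is to reduce the estimate to a counting problem for the small cylinders of $F$ that lie close to the internal boundaries, and to extract the decay $\delta^\gamma$ from a self-similar separation argument. First I would localize the measure to the relevant cylinders. Since $F\subseteq\overline{O(r)}$ and $\partial O(r)\subseteq\bigcup_{\sigma\in\Sigma(r)}S_\sigma\partial O$, and since $\eps\le\delta$, every point of $F_\eps\cap(O(r)^c)_\delta$ lies within $\delta$ of $\bigcup_{\sigma\in\Sigma(r)}S_\sigma\partial O$ (points outside $O(r)$ are within $\eps\le\delta$ of $\partial O(r)$ because $F\subseteq\overline{O(r)}$, and interior points within $\delta$ of the complement must be within $\delta$ of $\partial O(r)$). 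Hence
\[
C_k^\var(F_\eps,(O(r)^c)_\delta)\le C_k^\var\Big(F_\eps,\big(\textstyle\bigcup_{\sigma\in\Sigma(r)}S_\sigma\partial O\big)_\delta\Big).
\]
Decomposing $F$ at scale $\eps$ via $F=\bigcup_{\omega\in\Sigma(\eps)}S_\omega F$ and $F_\eps=\bigcup_\omega(S_\omega F)_\eps$, the right-hand side is controlled by summing the curvature carried by those $\omega\in\Sigma(\eps)$ for which $(S_\omega F)_\eps$ meets the $\delta$-collar of the internal boundary.

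Second, I would bound the curvature contributed by a single $\eps$-cylinder. For $\omega\in\Sigma(\eps)$ the definition \eqref{Sigma} gives $\eps/r_\omega>R>\sqrt2\,\diam F$, so \eqref{eq:R-big} applies to $F_{\eps/r_\omega}$; together with the homogeneity of degree $k$, motion invariance, and the relation $(S_\omega F)_\eps=S_\omega(F_{\eps/r_\omega})$ this yields $C_k^\var((S_\omega F)_\eps)=r_\omega^k\,C_k^\var(F_{\eps/r_\omega})\le c_k(F,R)\,\eps^k$. Locality of the curvature measures lets me attribute the mass of $C_k(F_\eps,\cdot)$ to individual cylinders up to the overlap regions where neighbouring $\eps$-neighbourhoods meet, and condition (ii) of Theorem~\ref{thm:global} bounds precisely the curvature sitting in these overlaps, again by a multiple of $\eps^k$. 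Consequently
\[
C_k^\var(F_\eps,(O(r)^c)_\delta)\le c\,\eps^k\cdot\#\{\omega\in\Sigma(\eps):(S_\omega F)_\eps\cap(\partial O(r))_\delta\neq\emptyset\}.
\]

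The heart of the argument, and the step I expect to be hardest, is the geometric counting estimate $\#\{\omega\in\Sigma(\eps):(S_\omega F)_\eps\cap(\partial O(r))_\delta\neq\emptyset\}\le c(r)\,\eps^{-D}\delta^{\gamma}$. Since $\#\Sigma(\eps)\asymp\eps^{-D}$ by \eqref{Sigma-eqn}, this asserts that only a $\delta^\gamma$-fraction of the cylinders lie within $\delta$ of the internal boundary. I would prove it by a self-similar, renewal-type induction over the levels of the construction: rescaling by $S_\tau^{-1}$ reduces the count near $S_\tau\partial O$ to a count of cylinders near $\partial O$ at the rescaled distance, and the strong open set condition (via $\bigcup_iS_iO\subseteq O$ together with $O\cap F\neq\emptyset$) guarantees a uniform gap, namely that at every level a fixed proportion of subcylinders sits at a definite distance inside $O$, hence away from $\partial O$. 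Iterating this gain over $\asymp\log(1/\delta)$ levels turns the per-level factor, which is strictly less than one, into the power $\delta^\gamma$ and fixes $\rho$ and $\gamma$, while collecting the powers of $r_\sigma$ through $\sum_{\sigma\in\Sigma(r)}r_\sigma^D=1$ absorbs the remaining $r$-dependence into $c(r)$. The delicate points will be making the separation quantitative and uniform across scales, and controlling the overlap corrections so that they do not overwhelm the $\delta^\gamma$ decay; I would take care to rely only on \eqref{eq:R-big} and conditions (i)--(ii), not on Theorem~\ref{unifbound}, which is deduced from this lemma afterwards.
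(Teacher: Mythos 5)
Your overall architecture coincides with the paper's: the paper proves the lemma by combining a per-cylinder curvature bound (Lemma~\ref{curvBest}: $C_k^\var(F_\eps,B)\le c\,\card\Omega(B,\eps)\,\eps^k$ for every closed $B$) with a counting estimate (Lemma~\ref{sigmaBest}: $\card\Omega((O(r)^c)_\delta,\eps)\le c(r)\,\eps^{-D}\delta^\gamma$), exactly your two ingredients. One difference of route: for the counting step the paper does not argue from scratch but cites \cite[Lemma 5.4.1]{winter} and only adjusts notation; your renewal-type sketch (a fixed proportion of subcylinders stays a definite relative distance inside $O$, iterated over $\asymp\log(1/\delta)$ levels) is in essence the argument behind that cited lemma, so this part is a plausible, if unverified, re-derivation rather than a gap. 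Your initial localization to $(\partial O(r))_\delta$ is harmless but unnecessary.

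The genuine gap is in your per-cylinder bound. You assert that ``condition (ii) of Theorem~\ref{thm:global} bounds precisely the curvature sitting in these overlaps,'' but condition (ii) is stated only for $\sigma\in\Sigma_b(\eps)$, i.e.\ for cylinders whose $\eps$-neighbourhood comes close to the complement of $\S O$; it says nothing about the overlap curvature of a cylinder lying deep inside $\S O$. Extending the bound $C_k^\var\bigl(F_\eps,(S_\sigma F)_\eps\cap A^{\sigma,\eps}\bigr)\le c\,\eps^k$ (where $A^{\sigma,\eps}$ is the union of the other $\eps$-cylinders) from $\Sigma_b(\eps)$ to all of $\Sigma(\eps)$ is a real assertion that the paper isolates as Lemma~\ref{curv-est}, proved by a self-similar zoom-in: unless $(S_\sigma F)_\eps\subseteq(S_\sigma O)_{-\eps}$ (in which case the overlap is empty), take the largest $n$ with $(S_\sigma F)_\eps\subseteq(S_{\sigma_1\ldots\sigma_n}O)_{-\eps}$ and write $\sigma=\sigma'\sigma''$ with $\sigma'=\sigma_1\ldots\sigma_n$; maximality forces $\sigma''\in\Sigma_b(\eps/r_{\sigma'})$, the OSC shows that every other cylinder meeting $(S_\sigma F)_\eps$ inside the open set $(S_{\sigma'}O)_{-\eps}$ carries the prefix $\sigma'$, and then locality in $(S_{\sigma'}O)_{-\eps}$ plus homogeneity identify the overlap curvature with $r_{\sigma'}^k$ times an overlap curvature at scale $\eps/r_{\sigma'}$, to which condition (ii) finally applies. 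Without this transfer your sum over cylinders is unjustified, and the step cannot be waved through: restricting (ii) to $\Sigma_b(\eps)$ is what makes the hypothesis checkable in examples, and the rescaling argument is precisely where OSC and the definition of $\Sigma_b$ do their work. (A minor related point: you also need the reformulation \eqref{eq:cond2} of Remark~\ref{rem:cond2}, since (ii) as stated bounds the curvature on $\bd(S_\sigma F)_\eps\cap\bd A^{\sigma,\eps}$ rather than on $(S_\sigma F)_\eps\cap A^{\sigma,\eps}$.)
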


Note that, for $k\in\{d-1,d\}$, this estimate holds for all $\eps$ not only those in $\Reg(F)$. The regularity is not required in these cases provided $C_{d-1}(F_\eps,\mydot)$ is interpreted as half the surface area of $F_\eps$, i.e. $C_{d-1}(F_\eps,\mydot)=\frac 12 \Ha^{d-1}(\bd F_\eps\cap\mydot)$. This is consistent with the definition given above. 

Lemma~\ref{key-lem} follows immediately by combining the Lemmas~\ref{curvBest} and~\ref{sigmaBest} below. For the proof of Lemma~\ref{curvBest} we require the following statement.
For $\eps\in(0,R)$ and $\sigma\in\Sigma(\eps)$, let
\begin{equation}\label{eq:A-sigma}
A^{\sigma,\eps}:=\bigcup_{\omega \in\Sigma(\eps)\setminus\{\sigma\}}(S_{\omega }F)_\eps.
\end{equation}

\begin{lem} \label{curv-est}
Under the hypotheses of Theorem~\ref{mainthm},
there is a constant $c>0$ such that, for all $\eps\in\Reg(F)\cap(0,R)$ (all $\eps\in(0,R)$, if $k\in\{d-1,d\}$) and all $\sigma\in\Sigma(\eps)$,
\begin{align}\label{eqn:curvB2}
C_k^\var(F_\eps,(S_\sigma F)_\eps\cap A^{\sigma,\eps})&\le c \eps^k\,.
\end{align}
\end{lem}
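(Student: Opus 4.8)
The plan is to treat the cases $k\in\{d-1,d\}$ and $k\le d-2$ separately, and for the latter to split the words $\sigma\in\Sigma(\eps)$ into the ``boundary'' words $\Sigma_b(\eps)$ and the ``interior'' words $\Sigma(\eps)\setminus\Sigma_b(\eps)$, reducing the interior ones to boundary words by self-similarity. For $k\in\{d-1,d\}$ the statement is elementary: there $C_k^\var(F_\eps,\mydot)$ is the surface area resp.\ the volume, and the region $(S_\sigma F)_\eps\cap A^{\sigma,\eps}$ lies inside $(S_\sigma F)_\eps$, a set of diameter at most $r_\sigma\diam F+2\eps\le(\diam F/R+2)\,\eps$ by \eqref{Sigma}; hence its $\Ha^{d-1}$- resp.\ $\Ha^d$-measure is at most $c\eps^{d-1}$ resp.\ $c\eps^d$ for all $\eps$ and all $\sigma$.

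For $k\le d-2$ and a boundary word $\sigma\in\Sigma_b(\eps)$ the estimate is essentially a restatement of hypothesis (ii). Since $C_k(F_\eps,\mydot)$, and hence its total variation, is concentrated on $\bd F_\eps$, and since by Remark~\ref{rem:cond2}, eq.~\eqref{eq:cond2}, the set $(S_\sigma F)_\eps\cap A^{\sigma,\eps}\cap\bd F_\eps$ coincides with $\bd(S_\sigma F)_\eps\cap\bd A^{\sigma,\eps}\cap\bd F_\eps$, I would write
\[
C_k^\var\big(F_\eps,(S_\sigma F)_\eps\cap A^{\sigma,\eps}\big)=C_k^\var\big(F_\eps,\bd(S_\sigma F)_\eps\cap\bd A^{\sigma,\eps}\cap\bd F_\eps\big)\le c_k\eps^k
\]
by \eqref{eq:cond-ii}, using $\bd A^{\sigma,\eps}=\bd\bigcup_{\omega\ne\sigma}(S_\omega F)_\eps$ and the fact that $\eps\in\Reg(F)$ guarantees $\eps\notin\mathcal N\cup\mathcal N'$.

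For an interior word the key point is a localization based on the OSC. One first checks $S_j\overline O\cap S_{\sigma_1}O=\emptyset$ for every $j\ne\sigma_1$, whence $F\cap S_{\sigma_1}O=S_{\sigma_1}F\cap S_{\sigma_1}O$. Because $\sigma\notin\Sigma_b(\eps)$ means $(S_\sigma F)_\eps\cap(\S O)^c_\eps=\emptyset$ by \eqref{Sigma_b}, one deduces $(S_\sigma F)_{2\eps}\subset\INt S_{\sigma_1}O$, and thus that $(S_\sigma F)_\eps$ is contained in the eroded cell $G:=\{x\in S_{\sigma_1}O:\dist(x,(S_{\sigma_1}O)^c)>\eps\}$, that $F_\eps\cap G=(S_{\sigma_1}F)_\eps\cap G$, and that $(S_\omega F)_\eps\cap G=\emptyset$ whenever $\omega_1\ne\sigma_1$. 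Hence only neighbours sharing the first letter contribute and the entire overlap lies in $G$. With this in hand the reduction is routine: writing $\eps_1:=\eps/r_{\sigma_1}\in\Reg(F)$ and $\hat\sigma:=\sigma_2\ldots\sigma_n$, the map $\omega=\sigma_1\hat\omega\mapsto\hat\omega$ identifies $\{\omega\in\Sigma(\eps):\omega_1=\sigma_1\}$ with $\Sigma(\eps_1)$, and $(S_{\sigma_1}F)_\eps=S_{\sigma_1}(F_{\eps_1})$. Applying locality on $G$ together with homogeneity of degree $k$ under $S_{\sigma_1}^{-1}$ yields the exact recursion
\[
C_k^\var\big(F_\eps,(S_\sigma F)_\eps\cap A^{\sigma,\eps}\big)=r_{\sigma_1}^{\,k}\,C_k^\var\big(F_{\eps_1},(S_{\hat\sigma}F)_{\eps_1}\cap A^{\hat\sigma,\eps_1}\big).
\]
Iterating, the word length drops by one per step, the accumulated prefactor after $m$ steps is $r_{\sigma|m}^{\,k}$ and the scale is $\eps_m=\eps/r_{\sigma|m}$, so that $r_{\sigma|m}^{\,k}\eps_m^{\,k}=\eps^k$ throughout. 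The process stops when the truncated word first becomes a boundary word, where the previous paragraph gives $c_k\eps_m^{\,k}$, or when it becomes the empty word, in which case $\Sigma(\eps_m)=\{\emptyset\}$ and the overlap is void; in either case the bound $c_k\eps^k$ is reproduced.

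The main obstacle I anticipate is the geometric localization of the third paragraph: establishing cleanly, from the OSC alone, that an interior cylinder and its relevant neighbours are confined to a single first–level cell $S_{\sigma_1}O$ in which $F_\eps$ is indistinguishable from $(S_{\sigma_1}F)_\eps$. The remaining ingredients — the prefix/antichain bookkeeping identifying the surviving neighbours with $\Sigma(\eps_1)$, the invariance of $\Reg(F)$ under $\eps\mapsto\eps/r_{\sigma_1}$ (so that every intermediate scale is admissible and (ii) applies there), and the scaling of the curvature measures — are routine.
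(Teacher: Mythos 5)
Your proof is correct and, in the substantive case $k\le d-2$, follows essentially the same route as the paper: the dichotomy boundary word versus interior word, OSC-based confinement of an interior cylinder $(S_\sigma F)_\eps$ to the eroded cell $(S_{\sigma_1}O)_{-\eps}$, locality plus motion invariance and homogeneity to rescale, and condition \eqref{eq:cond-ii} applied to the resulting boundary word; your letter-by-letter recursion stops at exactly the paper's maximal prefix $\sigma'$ (the paper performs the same reduction in a single step, choosing $n$ maximal with $(S_\sigma F)_\eps\subseteq(S_{\sigma|n}O)_{-\eps}$, your termination at the empty word corresponding to its ``nothing to prove'' case $(S_\sigma F)_\eps\subseteq(S_\sigma O)_{-\eps}$). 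The one soft spot is your treatment of $k=d-1$: a diameter bound on $(S_\sigma F)_\eps$ alone does not control $\Ha^{d-1}(\bd F_\eps\cap (S_\sigma F)_\eps)$, since a surface inside a ball of radius $c\eps$ can a priori have arbitrarily large area; the clean argument is the inclusion $\bd F_\eps\cap(S_\sigma F)_\eps\subseteq\bd (S_\sigma F)_\eps$ followed by scaling, $\Ha^{d-1}(\bd (S_\sigma F)_\eps)=r_\sigma^{d-1}\Ha^{d-1}(\bd F_{\eps/r_\sigma})$ with $\eps/r_\sigma\in(R,R/r_{\min}]$, and the uniform surface-area bound of \cite[Corollary 4.2]{rw09}, which is how the paper handles this case in Lemma~\ref{curvBest}. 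This is a minor point, since the paper's own proof of the present lemma also leaves $k\in\{d-1,d\}$ implicit (condition (ii) is only assumed for $k\le d-2$).
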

\begin{proof}
Let $m=|\sigma|$ and $\sigma=\sigma_1\ldots\sigma_m$ with $\sigma_i\in\{1,\ldots,N\}$. If $\sigma\in\Sigma_b(\eps)$, then, in view of equation \eqref{eq:cond2} in Remark~\ref{rem:cond2}, the assertion follows immediately from condition \eqref{eq:cond-ii}. If not, then $(S_\sigma F)_\eps\cap (\S O)^c_\eps=\emptyset$ and hence $(S_\sigma F)_\eps\subset (S_{\sigma_1} O)_{-\eps}$, where $A_{-\eps}:=((A^c)_\eps)^c$ denotes the (open) inner $\eps$-parallel set of a bounded set $A\subset\R^d$. In case $(S_\sigma F)_\eps\subset (S_\sigma O)_{-\eps}$,  there is nothing to prove, since the intersection  in \eqref{eqn:curvB2}  is  empty.  Otherwise let $1\le n<m$ be the index such that
\begin{equation}\label{eqn:curvB3}
(S_{\sigma} F)_\eps \subset (S_{\sigma_1\ldots\sigma_n} O)_{-\eps}
\end{equation}
but
\begin{equation}\label{eqn:curvB4}
(S_{\sigma} F)_\eps \nsubset
(S_{\sigma_1\ldots\sigma_{n+1}} O)_{-\eps}\,.
\end{equation}
With the notation $\sigma':=\sigma_1\ldots\sigma_n$ and $\sigma'':=\sigma_{n+1}\ldots\sigma_m$ (so that $\sigma=\sigma'\sigma''$), we infer from \eqref{eqn:curvB4} that
$
(S_{\sigma''}F)_{\eps/r_{\sigma'}}\nsubset (S_{\sigma_{n+1}}O)_{-\eps/r_{\sigma'}}
$
and thus
$
(S_{\sigma''}F)_{\eps/r_{\sigma'}}\cap (\S O)^c_{\eps/r_{\sigma'}}\neq\emptyset\,.
$
Hence $\sigma''\in\Sigma_b(\eps/r_{\sigma'})$. Since, by \eqref{eqn:curvB3}, we have $(S_{\sigma} F)_\eps\cap A^{\sigma,\eps} \subset (S_{\sigma'} O)_{-\eps}$, we can restrict the union in $A^{\sigma,\eps}$ to those $\omega =\omega _1\ldots,\omega _{m(\omega )}\in\Sigma(\eps)$ with $\omega _1\ldots \omega _n=\sigma'$. (A nonempty intersection of $(S_\omega F)_\eps\subset (S_\omega O)_\eps$ with the open set $(S_{\sigma'} O)_{-\eps}$ implies a nonempty intersection of $S_\omega O$ and $S_{\sigma'} O$ and thus $\omega _1\ldots \omega _n\neq\sigma'$ would contradict OSC.)
We infer that
\begin{align*}
(S_\sigma F)_\eps\,\cap A^{\sigma,\eps}
&= (S_{\sigma'\sigma''}F)_\eps\,\cap \bigcup_{\stackrel{\omega \in\Sigma(\eps)\setminus\{\sigma\}}{\omega _1\ldots \omega _n=\sigma'}}(S_\omega  F)_\eps \\
&= S_{\sigma'}\left((S_{\sigma''}F)_{\eps/r_{\sigma'}}\cap\bigcup_{\omega ''\in\Sigma(\eps/r_{\sigma'})\setminus\{\sigma''\}}(S_{\omega''} F)_{\eps/r_{\sigma'}}\right)\\
&=S_{\sigma'}\left( (S_{\sigma''} F)_\eps\,\cap A^{\sigma'',\eps/r_{\sigma'}} \right)
\end{align*}
and therefore,
by locality in the open set $(S_{\sigma'} O)_{-\eps}$\,,
\begin{align*}
C_k^\var(F_\eps, (S_\sigma F)_\eps\cap A^{\sigma,\eps})&=C_k^\var\left((S_{\sigma'}F)_\eps, (S_\sigma F)_\eps\cap A^{\sigma,\eps}\right)\\
&=C_k^\var\left(S_{\sigma'}(F_{\eps/r_{\sigma'}}), S_{\sigma'}\left( (S_{\sigma''} F)_{\eps/r_{\sigma'}}\,\cap A^{\sigma'',\eps/r_{\sigma'}}\right)\right)\\
&=r_{\sigma'}^k C_k^\var\left(F_{\eps/r_{\sigma'}},  (S_{\sigma''} F)_{\eps/r_{\sigma'}}\,\cap A^{\sigma'',\eps/r_{\sigma'}}\right)\\
&\le r_{\sigma'}^k \cdot c\, (\eps/r_{\sigma'})^k = c\, \eps^k\,.
\end{align*}
The last inequality is again due to condition \eqref{eq:cond-ii}, taking into account that $\sigma''\in\Sigma_b(\eps/r_{\sigma'})$ and that equation~\eqref{eq:cond2} in Remark~\ref{rem:cond2} holds. This completes the proof of Lemma~\ref{curv-est}.
\end{proof}

For a closed set $B\subseteq\R^d$ and $\eps\in(0,R)$, let
\begin{equation}\label{SigmaBdef}
\Omega(B,\eps):=\left\{\omega\in\Sigma(\eps): (S_\omega F)_\eps\cap B\neq\emptyset\right\}\,.
\end{equation}

\begin{lem} \label{curvBest} Under the hypotheses of Theorem~\ref{mainthm},
there is a positive constant $c$ such that, for all closed sets $B\subseteq\R^d$ and all $\eps\in\Reg(F)\cap(0,R)$ (all $\eps\in(0,R)$, if $k\in\{d-1,d\}$),
\[
C_k^\var(F_\eps, B)\le c\, \card \Omega(B,\eps) \eps^k\,.
\]
\end{lem}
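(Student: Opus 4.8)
The plan is to bound the total variation of the curvature measure on $B$ by localizing to the individual cylinder neighborhoods $(S_\omega F)_\eps$ for $\omega\in\Sigma(\eps)$ and summing over those that actually meet $B$, i.e.\ those indexed by $\Omega(B,\eps)$. The first observation is that $\{(S_\omega F)_\eps:\omega\in\Sigma(\eps)\}$ is a covering of $F_\eps$ (indeed $\S^{|\omega|}F\supseteq F$ for the relevant words, and $F_\eps=\bigcup_\omega (S_\omega F)_\eps$), so the measure $C_k^\var(F_\eps,\mydot)$, which for $k\le d-1$ is concentrated on $\bd F_\eps$, is supported on $\bigcup_{\omega\in\Sigma(\eps)}(S_\omega F)_\eps$. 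Since $B$ only meets the pieces indexed by $\Omega(B,\eps)$, I would write
\begin{equation*}
C_k^\var(F_\eps,B)\le\sum_{\omega\in\Omega(B,\eps)}C_k^\var\bigl(F_\eps,(S_\omega F)_\eps\bigr).
\end{equation*}
This is subadditivity of the total variation measure together with the covering property; a point of $\bd F_\eps\cap B$ lies in some $(S_\omega F)_\eps$ with $\omega\in\Omega(B,\eps)$, so the union of these sets covers $\bd F_\eps\cap B$.

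The heart of the matter is then a \emph{per-cylinder} bound: for each fixed $\omega\in\Sigma(\eps)$,
\begin{equation*}
C_k^\var\bigl(F_\eps,(S_\omega F)_\eps\bigr)\le c\,\eps^k,
\end{equation*}
with $c$ independent of $\omega$ and $\eps$. To get this I would split $(S_\omega F)_\eps$ into the part where it overlaps the other cylinders, namely $(S_\omega F)_\eps\cap A^{\omega,\eps}$, and the part where it does not. On the overlap region the estimate is exactly Lemma~\ref{curv-est}, which already gives a bound of the form $c\,\eps^k$. On the complementary (``free'') part, where $(S_\omega F)_\eps$ is locally isolated from the other cylinders, locality of the curvature measures lets me replace $F_\eps$ by $(S_\omega F)_\eps$ in a neighborhood, and then the scaling relation $(S_\omega F)_\eps=S_\omega(F_{\eps/r_\omega})$ together with homogeneity of degree $k$ reduces the free part to $r_\omega^k\,C_k^\var(F_{\eps/r_\omega},\cdot)$ evaluated on a region where $F_{\eps/r_\omega}$ is ``large'' relative to $\eps/r_\omega$. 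Because $\omega\in\Sigma(\eps)$ forces $Rr_\omega<\eps$, i.e.\ $\eps/r_\omega>R>\sqrt2\,\diam F$, the large-distance estimate \eqref{eq:R-big} applies and yields $C_k^\var(F_{\eps/r_\omega})\le c_k(F,R)(\eps/r_\omega)^k$; multiplying by $r_\omega^k$ gives the uniform bound $c\,\eps^k$ again.

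Combining the two pieces gives $C_k^\var(F_\eps,(S_\omega F)_\eps)\le c\,\eps^k$ for every $\omega\in\Sigma(\eps)$, and summing over the $\card\Omega(B,\eps)$ relevant words produces the claimed inequality $C_k^\var(F_\eps,B)\le c\,\card\Omega(B,\eps)\,\eps^k$. I expect the main obstacle to be the bookkeeping in the ``free'' part: making precise that the region of $(S_\omega F)_\eps$ not covered by other cylinders is one where $F_\eps$ locally coincides with the single parallel set $(S_\omega F)_\eps$, so that locality \eqref{eq:R-big} and the scaling/homogeneity argument can be invoked cleanly, and checking that the constant $c_k$ coming from \eqref{eq:R-big} can be taken uniform over all $\eps/r_\omega>R$ (which it is, being independent of the distance once it exceeds $R$). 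The cases $k\in\{d-1,d\}$ are easier because the measures are positive and defined for all $\eps$, so no regularity is needed there.
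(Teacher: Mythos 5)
Your proposal is correct and follows essentially the same route as the paper: subadditivity over the cylinders indexed by $\Omega(B,\eps)$, then the per-cylinder split of $C_k^\var(F_\eps,(S_\omega F)_\eps)$ into the overlap part $(S_\omega F)_\eps\cap A^{\omega,\eps}$ (handled by Lemma~\ref{curv-est}) and the free part, handled by locality, the scaling relation $(S_\omega F)_\eps=S_\omega(F_{\eps/r_\omega})$ and the large-distance bound \eqref{eq:R-big}. The only notable difference is that the paper dispatches $k\in\{d-1,d\}$ by a separate direct argument---bounding $C_k^\var(F_\eps,(S_\sigma F)_\eps)$ by $C_k((S_\sigma F)_\eps)$ via positivity and monotonicity, using boundedness of volume and surface area for radii in $[R,R/r_{\min}]$---rather than running the split in those cases, which avoids leaning on Lemma~\ref{curv-est} outside the range $k\le d-2$ where condition (ii) actually provides the estimate.
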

\begin{proof} 
Let $\eps\in\Reg(F)\cap(0,R)$ or $k\in\{d-1,d\}$. We have
\begin{eqnarray*}
C_k^\var(F_\eps,B)
&=& C_k^\var\left(F_\eps, \bigcup_{\sigma\in\Sigma(\eps)} (S_\sigma F)_\eps\cap B\right)
\le C_k^\var\left(F_\eps, \bigcup_{\sigma\in\Omega(B,\eps)} (S_\sigma F)_\eps\right)\\
&\le& \sum_{\sigma\in\Omega(B,\eps)} C_k^\var(F_\eps,(S_\sigma F)_\eps)\,.
\end{eqnarray*}
For $k\in\{d,d-1\}$, $C_k^\var(F_\eps,(S_\sigma F)_\eps)$ is bounded from above by $C_k((S_\sigma F)_\eps)=r_\sigma^k C_k(F_{\eps/r_{\sigma}})\le (R^{-1}\eps)^k C_k(F_{\eps/r_{\sigma}})$. Since $\eps/r_\sigma\in(R, R/r_{\min}]$ for all $\sigma\in\Sigma(\eps)$, in case $k=d$, the monotonicity of the volume implies that $C_k^\var(F_\eps,(S_\sigma F)_\eps)\le c_k \eps^k$ with $c_d:=R^{-d} \lambda_d(F_{R/r_{\min}})$. For $k=d-1$, the corresponding estimate follows from the total boundedness of the surface area $\Ha^{d-1}(\bd F_r)$ for $r$ in the interval $[R, R/r_{\min}]$  (cf.~\cite[Corollary 4.2]{rw09}).

For $k\le d-2$ and $\eps\in\Reg(F)\cap(0,R)$, we use the sets $A^{\sigma,\eps}$ defined in \eqref{eq:A-sigma} to split the terms in the above sum as follows:
\begin{equation} \label{eqn:curvB}
C_k^\var(F_\eps,(S_\sigma F)_\eps)=C_k^\var(F_\eps,(S_\sigma F)_\eps\setminus A^{\sigma,\eps})+C_k^\var(F_\eps,(S_\sigma F)_\eps\cap A^{\sigma,\eps})\,.
\end{equation}
Since $\eps\in\Reg(F)$ is regular for $F$ and $S_\sigma F$, the locality property allows to replace $F_\eps$ by $(S_\sigma F)_\eps$ in the first term. 
Hence this term is bounded by
\[
C_k^\var((S_\sigma F)_\eps)=C_k^\var(S_\sigma (F_{\eps/r_\sigma}))=r^k_\sigma C_k^\var(F_{\eps/r_\sigma}).
\]
Since $\eps/r_\sigma> R$, we infer from \eqref{eq:R-big} the existence of a constant $c'$ (independent of $\sigma$ and $\eps$) such that $C_k^\var(F_{\eps/r_\sigma})\le c' (\eps/r_\sigma)^k$. Thus
$C_k^\var(F_\eps,(S_\sigma F)_\eps\setminus A^{\sigma,\eps})\le c' \eps^k$.

To the second term on the right of \eqref{eqn:curvB} we apply Lemma~\ref{curv-est}, which ensures that this term is bounded by $c''\eps^k$ for some constant $c''$ (independent of $\eps$ and $\sigma$).
Putting the bounds for the first and the second term in \eqref{eqn:curvB} back together and summing up over all $\sigma\in\Omega(B,\eps)$, the assertion follows immediately for the constant $c:=c'+c''$.
\end{proof}

It remains to show that for the choice $B=(O(r)^c)_\delta$, the cardinality of the sets $\Omega(B,\eps)$ is bounded as required. This follows easily from a similar result in \cite{winter}.
\begin{lem} \label{sigmaBest}
Let $F$ be a self-similar set in $\R^d$ satisfying OSC.
There exist positive constants $\rho, \gamma$ and, for each $r>0$, a positive constant $c=c(r)$ such that, for all $0<\eps\le\delta\le\rho r$,
\[\card{\Omega\left((O(r)^c)_\delta,\eps\right)} \le c \eps^{-D} \delta^{\gamma}.\]
\end{lem}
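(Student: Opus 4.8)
The plan is to reduce the estimate to the corresponding single‑copy count of cylinders near $\partial O$, for which I would invoke the analogous estimate from \cite{winter}, and then to sum over the cylinders of $\Sigma(r)$. First I observe that membership $\omega\in\Omega((O(r)^c)_\delta,\eps)$ forces $S_\omega F$ to lie close to $\partial O(r)$: if $x\in (S_\omega F)_\eps\cap (O(r)^c)_\delta$ then $\dist(S_\omega F,O(r)^c)\le\eps+\delta\le 2\delta$, and since $F\subseteq\overline{O(r)}$ (because $F=\bigcup_{\sigma\in\Sigma(r)}S_\sigma F\subseteq\bigcup_{\sigma\in\Sigma(r)}\overline{S_\sigma O}=\overline{O(r)}$, using $F\subseteq\overline O$) this means $\dist(S_\omega F,\partial O(r))\le 2\delta$. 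Because $\partial O(r)\subseteq\bigcup_{\sigma\in\Sigma(r)}S_\sigma\partial O$ and each $S_\sigma O\subseteq \INt O(r)$ is disjoint from $\partial O(r)$, so that $\partial O(r)\cap\overline{S_\sigma O}\subseteq S_\sigma\partial O$, a short segment‑crossing argument shows that $S_\omega F$ is within $2\delta$ of $S_\sigma\partial O$, where $\sigma=\sigma(\omega)\in\Sigma(r)$ is the unique prefix of $\omega$ lying in $\Sigma(r)$ (for $\eps<r$ every $\omega\in\Sigma(\eps)$ has exactly one such prefix, since $\Sigma(\eps)$ refines the cut set $\Sigma(r)$).

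Next I would rescale. Writing $\omega=\sigma\omega'$ and applying $S_\sigma^{-1}$ (a similarity of ratio $r_\sigma^{-1}$) turns the condition into $\dist(S_{\omega'}F,\partial O)\le 2\delta/r_\sigma$, while $\omega'\in\Sigma(\eps/r_\sigma)$, which is immediate from $r_\omega=r_\sigma r_{\omega'}$ and the defining inequalities of $\Sigma(\cdot)$. Thus, for each fixed $\sigma\in\Sigma(r)$, the number of $\omega\in\Omega$ with prefix $\sigma$ is bounded by the single‑copy count of words $\omega'\in\Sigma(\eps/r_\sigma)$ whose cylinder lies within $2\delta/r_\sigma$ of $\partial O$. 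For this count I would use the estimate from \cite{winter}: there are universal constants $\gamma>0$, $\rho_0>0$, $c_0$ so that this count is at most $c_0(\eps/r_\sigma)^{-D}(2\delta/r_\sigma)^{\gamma}$ whenever $\eps/r_\sigma\le 2\delta/r_\sigma\le\rho_0$. Summing over $\sigma\in\Sigma(r)$ gives
\[
\card\Omega((O(r)^c)_\delta,\eps)\le c_0\,2^{\gamma}\,\eps^{-D}\delta^{\gamma}\sum_{\sigma\in\Sigma(r)}r_\sigma^{D-\gamma},
\]
and since $\sum_{\sigma\in\Sigma(r)}r_\sigma^{D-\gamma}\le\bigl(\max_{\sigma\in\Sigma(r)}r_\sigma^{-\gamma}\bigr)\sum_{\sigma\in\Sigma(r)}r_\sigma^{D}=\max_{\sigma}r_\sigma^{-\gamma}$ is finite and depends only on $r$ (by \eqref{Sigma-eqn} and $r_\sigma\ge (r/R)r_{\min}$), this is the desired bound with $c=c(r)$. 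Choosing the universal constant $\rho$ small enough — for instance $\rho\le\rho_0 r_{\min}/(2R)$ and $\rho<r_{\min}$ (the latter guaranteeing that $\sigma$ is a proper prefix, so that $\omega'$ is nonempty) — makes $2\delta/r_\sigma\le\rho_0$ and keeps every reduction valid, uniformly in $r$.

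The reduction and the summation are the routine parts; the real content sits in the single‑copy estimate borrowed from \cite{winter}, and that is where I expect the main obstacle to lie. The point is to produce a strictly positive gap exponent $\gamma$, i.e.\ to show that the fraction $\eps^{D}\card\{\omega'\in\Sigma(\eps'):\dist(S_{\omega'}F,\partial O)\le\delta'\}$ of all $\eps'$‑cylinders lying near $\partial O$ decays like $(\delta')^{\gamma}$ rather than merely tending to $0$. I would obtain this by a renewal argument exploiting self‑similarity: since $\partial O$ is disjoint from the open set $\S O$ and $\partial O\cap\overline{S_iO}\subseteq S_i\partial O$, a cylinder near $\partial O$ must, for $\delta'$ below a threshold, sit inside a child $\overline{S_iO}$ that touches $\partial O$, and after deleting that letter it is again a cylinder near $\partial O$ in a rescaled copy. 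This yields an inequality $n(\delta')\le\sum_{i\in T}n(\delta'/r_i)$ for the count $n$, where $T$ is the set of boundary‑touching children. The genuinely delicate input is that $T$ is a \emph{proper} subset in the quantitative sense $\sum_{i\in T}r_i^{D}<1$; this uses SOSC, which produces a word $\tau$ with $\overline{S_\tau O}\subseteq O$ (so that at level $|\tau|$ not all cylinders touch $\partial O$), equivalently the fact that the self‑similar measure assigns no mass to $\partial O$. The exponent $D'$ solving $\sum_{i\in T}r_i^{D'}=1$ then satisfies $D'<D$, and $\gamma:=D-D'>0$ is the required gap.
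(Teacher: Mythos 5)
Your argument is correct in substance but takes a different route from the paper's, which is shorter because it quotes \cite[Lemma~5.4.1]{winter} in exactly the form proved there: the number of words of the \emph{coarser} cut $\Sigma(R\tilde\rho^{-1}\eps)$ whose $\eps$-parallel cylinders meet $(O(r)^c)_\delta$ is at most $\tilde c(r)\,\eps^{-D}\delta^\gamma$. The paper's only remaining task is then the mismatch of cuts, settled by a prefix argument: each $\omega\in\Omega((O(r)^c)_\delta,\eps)$ factors as $\sigma\omega'$ with $\sigma$ in the coarse cut; $\sigma$ again hits the set because $(S_{\sigma\omega'}F)_\eps\subseteq(S_\sigma F)_\eps$, and each $\sigma$ has at most $\card\Sigma(\tilde\rho)$ extensions in $\Sigma(\eps)$. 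You instead factor along the cut $\Sigma(r)$, rescale by $S_\sigma^{-1}$ to reduce to a single-copy count of cylinders near $\partial O$, and sum over $\sigma\in\Sigma(r)$ using $\sum_\sigma r_\sigma^{D-\gamma}\le\max_\sigma r_\sigma^{-\gamma}$. Your individual steps --- uniqueness of the prefix when $\rho<r_{\min}$, the segment-crossing bound $\dist(S_{\omega'}F,\partial O)\le 2\delta/r_\sigma$, the membership $\omega'\in\Sigma(\eps/r_\sigma)$, and the choice $\rho\le\rho_0 r_{\min}/(2R)$ keeping everything uniform in $r$ --- are all sound, and your organization has a genuine merit: it makes evident that $\gamma$ and $\rho$ depend only on $O$ (the content of Remark~\ref{rem:rho}, which the paper can only extract by inspecting the proof in \cite{winter}) and it exposes the origin of $\gamma$, which the paper hides inside the citation.

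Two caveats, both located in the portion you delegate to \cite{winter}. First, the single-copy, fine-cut estimate you invoke is not the statement the paper quotes from \cite{winter}, which counts coarse-cut words meeting $(O(r)^c)_\delta$; if your form is not available there verbatim, your renewal sketch becomes part of the proof, and passing from a coarse-cut to a fine-cut count requires in addition the same constant-factor prefix argument the paper uses. Second, in that sketch the level-one claim $\sum_{i\in T}r_i^{D}<1$ is false in general: for the Sierpi\'nski gasket with $O$ the open triangle, all first-level copies touch $\partial O$, so $T=\{1,\ldots,N\}$. The gap appears only at level $|\tau|$, where $\tau$ is the SOSC word with $\overline{S_\tau O}\subseteq O$ --- as your parenthesis indicates --- so the recursion must run over the level-$|\tau|$ subdivision; moreover the two scales must be separated: one counts boundary-near words at the cut of scale $\delta'$ (of order $(\delta')^{-D'}$) and multiplies by the number of descendants each has in $\Sigma(\eps')$ (of order $(\delta'/\eps')^{D}$), which yields the bound $(\eps')^{-D}(\delta')^{D-D'}$ and your exponent $\gamma=D-D'$.
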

\begin{proof}
Fix $r>0$. By \cite[Lemma 5.4.1, p.47]{winter}, there are positive constants $\tilde c=\tilde c(r), \gamma$ and $\tilde\rho$ (with $\tilde\rho <1$) such that the cardinality of the set
\[
\Sigma((O(r)^c)_\delta,\eps):=\{\omega\in\Sigma(R\tilde\rho^{-1}\eps): (S_\omega F)_\eps\cap (O(r)^c)_\delta \neq\emptyset\}
\]
is bounded by $\tilde c\eps^{-D}\delta^\gamma$ for all $\eps\le\delta\le R^{-1}\tilde\rho r$. (Observe that the notation in \cite{winter} is slightly different to ours, due to a necessary extra constant $R$ in the definition of $\Sigma(\eps)$. More precisely, the sets $\Sigma(\eps)$ and $O(r)$ here coincide with $\Sigma(R^{-1}\eps)$ and $O(R^{-1}r)$ there and our $\tilde\rho$ is the $\rho$ in \cite{winter}. The sets $\Sigma(B,\eps)$ and the constant $\gamma$ are the same.) 
Therefore it suffices to show
that
\begin{equation}\label{Sigma-Omega_eqn}
\card{\Omega\left((O(r)^c)_\delta,\eps\right)}\le c'\card{\Sigma\left((O(r)^c)_\delta,\eps\right)}
\end{equation}
for some constant $c'$. Indeed this is true for any closed set $B\subseteq\rd$ instead of $(O(r)^c)_\delta$. We have
\begin{eqnarray*}
\card{\Omega\left(B,\eps\right)}&\le& \card\left\{\sigma \omega\in\Sigma(\eps): \sigma\in\Sigma(B,\eps), \omega\in\Sigma(\eps/r_\sigma)\right\}\\
&\le& \card \Sigma(B,\eps) \card\Sigma(\tilde\rho),
\end{eqnarray*}
where the last inequality is due to the fact that $\eps/r_\sigma>\tilde\rho$ for each $\sigma\in\Sigma(B,\eps)$ and that the cardinality of $\Sigma(r)$ is monotone decreasing. Since $\tilde\rho$ is fixed, $c':=\card\Sigma(\tilde\rho)$ is just a constant, proving \eqref{Sigma-Omega_eqn}. Hence the assertion of the lemma holds with $c:=\tilde c c'$, $\rho:= R^{-1}\tilde{\rho}$ and $\gamma$ as above.
\end{proof}

\begin{rem} \label{rem:rho} Note that the constants $\gamma$ and $\rho$ in Lemma~\ref{sigmaBest} (and Lemma~\ref{key-lem}) do not depend on $r$. From the proof of \cite[Lemma 5.4.1]{winter} it is clear that they just depend on the choice of the open set $O$, see also \cite[(5.1.8) and (5.1.9)]{winter} for the definition of $\gamma$ and $\tilde\rho$.
\end{rem}

\section{Proof of Theorem~\ref{unifbound}} \label{page:proof2}

Now we have all the ingredients to prove Theorem~\ref{unifbound}. In particular, we will make use of Lemma~\ref{key-lem} and Lemma~\ref{curvBest} above. 
In fact, we will only need the following simple consequence of Lemma~\ref{curvBest}.
\begin{cor}\label{cor:interval}
Let $0<a\le b\le R$. Under the hypotheses of Theorem~\ref{mainthm},
there is a positive constant $c'$ such that, for all $\eps\in\Reg(F)\cap [a,b]$ (all $\eps\in[a,b]$, if $k\in\{d-1,d\}$),
\[
C_k^\var(F_\eps)\le c'\,.
\]
\end{cor}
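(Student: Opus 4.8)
The plan is to apply Lemma~\ref{curvBest} with the trivial choice $B=\R^d$. Since $(S_\omega F)_\eps$ is nonempty for every $\omega$, we have $\Omega(\R^d,\eps)=\Sigma(\eps)$, so Lemma~\ref{curvBest} yields $C_k^\var(F_\eps)=C_k^\var(F_\eps,\R^d)\le c\,\card\Sigma(\eps)\,\eps^k$ for all $\eps\in\Reg(F)\cap(0,R)$ (and for all $\eps\in(0,R)$ when $k\in\{d-1,d\}$). Everything then reduces to controlling the product $\card\Sigma(\eps)\,\eps^k$ on the interval $[a,b]$.

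Next I would bound $\card\Sigma(\eps)$ from above using the normalization \eqref{Sigma-eqn}. For $\omega\in\Sigma(\eps)$ the defining inequality \eqref{Sigma} gives $Rr_{\omega||\omega|-1}\ge\eps$, and since $r_\omega=r_{\omega||\omega|-1}\,r_{\omega_{|\omega|}}\ge r_{\min}\,r_{\omega||\omega|-1}$, we obtain $r_\omega\ge r_{\min}\eps/R$, hence $r_\omega^D\ge (r_{\min}\eps/R)^D$. Summing this lower bound over $\omega\in\Sigma(\eps)$ and invoking $\sum_{\omega\in\Sigma(\eps)}r_\omega^D=1$ gives $\card\Sigma(\eps)\le (R/(r_{\min}\eps))^D=R^D r_{\min}^{-D}\,\eps^{-D}$. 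Combining the two estimates produces a bound of the form $C_k^\var(F_\eps)\le c\,R^D r_{\min}^{-D}\,\eps^{k-D}$.

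It then remains to bound $\eps^{k-D}$ on $[a,b]$. On the compact interval $[a,b]\subset(0,\infty)$ the continuous function $\eps\mapsto\eps^{k-D}$ is bounded by $\max\{a^{k-D},b^{k-D}\}$, so the claim follows with $c':=c\,R^D r_{\min}^{-D}\max\{a^{k-D},b^{k-D}\}$. I expect no genuine obstacle here: the corollary is essentially a repackaging of Lemma~\ref{curvBest} together with the elementary counting bound on $\card\Sigma(\eps)$. The only point worth flagging is the role of the hypothesis $a>0$: when $k<D$ the factor $\eps^{k-D}$ blows up as $\eps\to 0$, so the lower endpoint must be kept away from the origin, while the constraint $b\le R$ merely ensures we remain in the range where Lemma~\ref{curvBest} is available.
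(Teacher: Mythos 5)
Your proof is correct and follows essentially the same route as the paper: both apply Lemma~\ref{curvBest} with $B=\R^d$, so that $\Omega(B,\eps)=\Sigma(\eps)$, and then bound $\card\Sigma(\eps)\,\eps^k$ on $[a,b]$. The only (immaterial) difference lies in the elementary counting step: the paper uses monotonicity of $\eps\mapsto\card\Sigma(\eps)$ to get the bound $\card\Sigma(a)\,b^k$, whereas you derive the explicit estimate $\card\Sigma(\eps)\le (R/(r_{\min}\eps))^D$ from \eqref{Sigma-eqn} and then maximize $\eps^{k-D}$ over the compact interval $[a,b]$.
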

\begin{proof} Apply Lemma~\ref{curvBest}, with $B:=\R^d$ so that $\Sigma(B,\eps)=\Sigma(\eps)$ and observe that $c  \# \Sigma(\eps)\eps^k$
is bounded from above by $c':=c \# \Sigma(a) b^k$.
\end{proof}

\begin{proof}[Proof of Theorem~\ref{unifbound}]
Let $k\in\{0,\ldots,d-2\}$. First we will show that, for $\eps\in\Reg(F)$ and $0<r\le R$,
\begin{align} \label{eq:unifbound1}
C_k^\var(F_\eps)\le \sum_{\omega\in\Sigma(r)} C_k^\var((S_\omega F)_\eps) + C_k^\var(F_\eps,(O(r)^c)_\eps)\,.
\end{align}
For fixed $\eps$ and $r$ as above, let $U:=\bigcup_{v,\omega \in\Sigma(r)} (S_vF)_\eps\cap(S_\omega F)_\eps$ and $B^\omega :=(S_\omega F)_\eps\setminus U$ for $\omega \in\Sigma(r)$.
Then $F_\eps=U \cup \bigcup_{\omega \in\Sigma(r)} B^\omega $ and thus,
\[
C_k^\var(F_\eps)\le \sum_{\omega \in\Sigma(r)} C_k^\var(F_\eps, B^\omega ) + C_k^\var(F_\eps, U)\,.
\]
The set $A^\omega :=\big(\bigcup_{v\in\Sigma(r)\setminus\{\omega \}} (S_v F)_\eps\big)^c$ is open (the complement is a finite union of closed sets).
Moreover, $B^\omega \subseteq A^\omega $ and $F_\eps\cap A^\omega =(S_\omega F)_\eps\cap A^\omega $. Hence, by locality, we have
$C_k^\var(F_\eps, B^\omega )=C_k^\var((S_\omega  F)_\eps, B^\omega )\le C_k^\var((S_\omega  F)_\eps)$. It is easily seen that
$U\subset (O(r)^c)_\eps$ and so the inequality \eqref{eq:unifbound1} follows.

Now fix some $r<1$ and set $\tilde r:=Rr$. Applying Lemma~\ref{key-lem} and the equality $C_k^\var((S_\omega  F)_\eps)=r_\omega ^k C^\var_k(F_{\eps/r_\omega })$, we infer from \eqref{eq:unifbound1} that, for all $\eps\in\Reg(F)$ with $\eps<\rho \tilde r$,
\begin{align} \label{eq:unifbound2}
C_k^\var(F_\eps)\le \sum_{\omega \in\Sigma(\tilde r)} r_\omega^k C_k^\var(F_{\eps/r_\omega}) + c\, \eps^{k-D+\gamma}\,,
\end{align}
for some positive constants $c=c(\tilde r)$ and $\gamma$. To treat the interval $[\rho \tilde r,1]$, we infer from Corollary~\ref{cor:interval} that there exists a constant $c'=c'(\tilde r)$ such that, for all $\eps\in\Reg(F)\cap[\rho \tilde r,1]$,
$
C_k^\var(F_\eps)\le c'
$
and conclude that (by enlarging the constant $c$, if necessary) inequality \eqref{eq:unifbound2} holds, in fact,  for all $\eps\in\Reg(F)\cap(0,1]$.

Let $g:\Reg(F)\to\R$ be defined by $g(\eps):=\eps^{D-k}C_k^\var(F_\eps)$. We have to show that
$
\sup\{g(\eps): \eps\in\Reg(F)\cap(0,R]\}
$
is bounded by some positive constant $M$. 
By \eqref{eq:unifbound2}, the function $g$ satisfies, for $\eps\in\Reg(F)\cap(0,1]$,
\begin{equation} \label{ineq:g}
g(\eps)\le \sum_{\omega\in\Sigma(\tilde r)} r_\omega^D g(\eps/r_\omega)+c \eps^\gamma.
\end{equation}
For $n\in\N$ set $I_n:=\Reg(F)\cap(r^n,1]$ and $M_1:=\max\{\sup_{\eps\in I_1} g(\eps),c\}$. Note that $M_1<\infty$ is ensured by Corollary~\ref{cor:interval} above. We claim that for $n\in\N$,
\begin{equation}\label{ind1}
\sup_{\eps\in I_n}g(\eps) \le M_n:=M_1\sum_{j=0}^{n-1} (r^\gamma)^j,
\end{equation}
which we show by induction. For $n=1$, the statement is obvious. So assume that \eqref{ind1} holds for $n=k$. Then for $\eps\in I_k$, we have $g(\eps)\le M_k\le M_{k+1}$ and for $\eps\in I_{k+1}\setminus I_k$
we have $\eps/r_\omega \ge\eps/r\ge r^k$, i.e.\ $\eps/r_\omega\in I_k$ for all $\omega\in\Sigma(\tilde r)$. Hence, by \eqref{ineq:g} and \eqref{Sigma-eqn},
\[
g(\eps)\le \sum_{\omega\in\Sigma(\tilde r)} r_\omega^D g(\eps/r_\omega)+c\eps^\gamma\le \sum_{\omega\in\Sigma(\tilde r)} r_\omega^D M_k + M_1 r^{\gamma k}=M_{k+1},
\]
proving \eqref{ind1} for $n=k+1$ and hence for all $n\in\N$. Now observe that the sequence $(M_n)_{n\in\N}$ is bounded. Hence $g(\eps)$ is bounded in $(0,R]$, completing the proof of Theorem~\ref{unifbound}.
\end{proof}



Now we apply Lemma~\ref{key-lem} and Theorem~\ref{unifbound}, to derive some further technical estimates. We write $C_k^+(F_\eps,\mydot)$ and $C_k^-(F_\eps,\mydot)$ for
the positive and negative variation measure of $C_k(F_\eps,\mydot)$ (and, as before, $C_k^\var(F_\eps,\mydot)$ for the total variation).
\begin{lem}\label{curvbound}
Let $k\in\{0,1,\ldots,d\}$ and let $F$ be a self-similar set in $\R^d$ satisfying OSC and conditions (i) and (ii) of Theorem~\ref{thm:global}.
Let $\rho$ and $\gamma$ as in Lemma~\ref{key-lem} and let $\omega\in\Sigma^*$. Then there exists a positive constant $c=c(\omega)$ such that for all $\eps\in\Reg(F)$ and $\delta$ with $0<\eps\le\delta\le \rho r_\omega$
and for $\bullet\in\{+,-,\var\}$
\begin{equation}\label{upbound}
C_k^\bullet(F_\eps, (S_\omega O)_\delta)\le r_\omega ^k C_k^\bullet(F_{\eps r_\omega^{-1}}) + c \eps^{k-s} \delta^\gamma
\end{equation}
and
\begin{equation}\label{lowbound}
C_k^\bullet(F_\eps, S_\omega O)\ge r_\omega^k C_k^\bullet(F_{\eps r_\omega^{-1}}) - c \eps^{k-s} \delta^\gamma.
\end{equation}
\end{lem}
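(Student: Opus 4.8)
The plan is to reduce the estimates for $C_k^\bullet(F_\eps, (S_\omega O)_\delta)$ to the global bound $r_\omega^k C_k^\bullet(F_{\eps r_\omega^{-1}})$ via the scaling relation $(S_\omega F)_\eps = S_\omega(F_{\eps/r_\omega})$ together with the locality and homogeneity properties, and to absorb the mismatch into the error term $c\,\eps^{k-s}\delta^\gamma$ using Lemma~\ref{key-lem}. The natural reference region is $(S_\omega F)_\eps$, whose curvature measure rescales exactly; the sets $S_\omega O$ and $(S_\omega O)_\delta$ differ from it, and from each other, only near the boundary of $S_\omega O$, which is exactly where Lemma~\ref{key-lem} controls the curvature mass.

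First I would fix $\omega\in\Sigma^*$ and $\eps\le\delta\le\rho r_\omega$, and separate the three cases $\bullet\in\{+,-,\var\}$ uniformly by noting that each of $C_k^+, C_k^-, C_k^\var$ is a positive measure dominated by $C_k^\var$ and that all three satisfy the same scaling and locality. The key geometric observation is that, by the open set condition applied to the first iterate, $(S_\omega F)_\eps$ sits inside $S_\omega O$ (up to an $\eps$-collar), so the symmetric difference between $(S_\omega O)_\delta$, the reference set $(S_\omega F)_\eps$, and $S_\omega O$ is contained in a $\delta$-neighborhood of the complement of the first-level feasible region—precisely a set of the form $(O(r)^c)_\delta$ with $r$ comparable to $r_\omega$. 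For the upper bound I would write
\begin{align*}
C_k^\bullet(F_\eps, (S_\omega O)_\delta)
&\le C_k^\bullet(F_\eps, (S_\omega F)_\eps) + C_k^\bullet\big(F_\eps, (S_\omega O)_\delta\setminus (S_\omega F)_\eps\big),
\end{align*}
handle the first summand exactly by locality in a neighborhood where $F_\eps$ and $(S_\omega F)_\eps$ agree, giving $r_\omega^k C_k^\bullet(F_{\eps r_\omega^{-1}})$ via homogeneity of degree $k$, and bound the second summand by $C_k^\var(F_\eps, (O(r)^c)_\delta)\le c\,\eps^{k-D}\delta^\gamma$ from Lemma~\ref{key-lem}. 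Since $s=D$ here, this matches the claimed error term $c\,\eps^{k-s}\delta^\gamma$ after adjusting $c$ by a factor depending only on $\omega$.

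For the lower bound \eqref{lowbound} I would run the reverse inclusion: $S_\omega O\supseteq (S_\omega F)_\eps$ minus a boundary collar contained in $(O(r)^c)_\delta$, so that
\begin{align*}
C_k^\bullet(F_\eps, S_\omega O)
&\ge C_k^\bullet(F_\eps, (S_\omega F)_\eps) - C_k^\bullet\big(F_\eps, (S_\omega F)_\eps\setminus S_\omega O\big),
\end{align*}
again identifying the first term with $r_\omega^k C_k^\bullet(F_{\eps r_\omega^{-1}})$ by locality and homogeneity, and the subtracted term with an error of order $\eps^{k-D}\delta^\gamma$. The main obstacle I anticipate is the careful verification that the relevant boundary collars are genuinely contained in a set of the form $(O(r)^c)_\delta$ with $r$ a fixed multiple of $r_\omega$, so that Lemma~\ref{key-lem} applies with the stated uniform constants $\rho,\gamma$ (which by Remark~\ref{rem:rho} do not depend on $r$); this requires pinning down, via the OSC and the inclusion $(S_\omega F)_\eps\subset (S_\omega O)_{\eps}$, exactly how $(S_\omega O)_\delta$, $S_\omega O$, and $(S_\omega F)_\eps$ overlap near $\partial(S_\omega O)$. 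Once that containment is established, the locality, motion-invariance, and homogeneity properties convert all the main terms to the desired rescaled global quantities, and Lemma~\ref{key-lem} supplies the error term directly.
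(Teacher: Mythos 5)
There is a genuine gap at the heart of your argument: the identification of the main term. You take $(S_\omega F)_\eps$ as the reference region and claim $C_k^\bullet(F_\eps,(S_\omega F)_\eps)=r_\omega^k C_k^\bullet(F_{\eps r_\omega^{-1}})$ ``exactly by locality in a neighborhood where $F_\eps$ and $(S_\omega F)_\eps$ agree''. No such neighborhood exists: near $\partial S_\omega O$ the $\eps$-parallel sets of other cylinder sets of generation $\Sigma(\eps)$ intersect $(S_\omega F)_\eps$, so $F_\eps$ and $(S_\omega F)_\eps$ coincide on no open set containing $(S_\omega F)_\eps$; they coincide only well inside $S_\omega O$, e.g.\ on the open set $(S_\omega O)_{-\eps}$, because OSC forces $F\cap S_\omega O\subseteq S_\omega F$. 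Moreover, the claimed identity is simply false for $k\le d-1$ (only $k=d$ works): the right-hand side equals $C_k^\bullet((S_\omega F)_\eps,\R^d)$, a measure carried by $\partial(S_\omega F)_\eps$, while the left-hand side is carried by $\partial F_\eps$. For $k=d-1$ one has only ``$\le$'' --- the part of $\partial(S_\omega F)_\eps$ buried in the interior of $F_\eps$ is missing on the left --- so the identity fails in exactly the direction needed for \eqref{lowbound}; for $k\le d-2$ neither inequality is available, since where other cylinder sets meet $(S_\omega F)_\eps$ the two signed curvature measures genuinely differ, and controlling that difference is precisely what condition (ii) of Theorem~\ref{thm:global} and Lemma~\ref{curv-est} are for. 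This discrepancy is of the same order $\eps^{k-D}\delta^\gamma$ as the error term; it is the actual content of the lemma, not something locality gives for free. That is why the paper decomposes differently: $(S_\omega O)_\delta=(S_\omega O)_{-\delta}\cup(\partial S_\omega O)_\delta$. Locality on $(S_\omega O)_{-\eps}\supseteq(S_\omega O)_{-\delta}$ gives $C_k^\bullet(F_\eps,(S_\omega O)_{-\delta})=C_k^\bullet((S_\omega F)_\eps,(S_\omega O)_{-\delta})$, which is $\le r_\omega^k C_k^\bullet(F_{\eps r_\omega^{-1}})$ for \eqref{upbound}, and for \eqref{lowbound} equals $r_\omega^k C_k^\bullet(F_{\eps r_\omega^{-1}})$ minus the defect $C_k^\bullet\bigl((S_\omega F)_\eps,((S_\omega O)_{-\delta})^c\bigr)=r_\omega^k C_k^\bullet\bigl(F_{\eps r_\omega^{-1}},(O^c)_{\delta r_\omega^{-1}}\bigr)$, which must be estimated by Lemma~\ref{key-lem} applied to the \emph{rescaled} copy $F_{\eps r_\omega^{-1}}$ --- a step entirely absent from your sketch. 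The collar $(\partial S_\omega O)_\delta\subseteq(O(Rr_\omega)^c)_\delta$ is then handled by Lemma~\ref{key-lem} for $F_\eps$ itself.

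Second, the containment you yourself flag as the main obstacle is false as stated, on the upper-bound side: $(S_\omega O)_\delta\setminus(S_\omega F)_\eps$ is \emph{not} contained in $(O(r)^c)_\delta$ for $r$ comparable to $r_\omega$. For the Sierpinski gasket with $O$ the open triangle, the centre of the middle hole of $S_{\omega i}O$ lies in $(S_\omega O)_\delta\setminus(S_\omega F)_\eps$ for all small $\eps\le\delta$, yet its distance to $O(Rr_\omega)^c$ is of order $r_\omega$, so it is not in $(O(Rr_\omega)^c)_\delta$. The containment becomes true only after intersecting with $F_\eps$, which is legitimate because $C_k^\bullet(F_\eps,\mydot)$ is concentrated on $\partial F_\eps$ for $k\le d-1$ (on $F_\eps$ for $k=d$): if additionally $x\in F_\eps$, then some $y\in F\setminus S_\omega F$ satisfies $|x-y|\le\eps$, hence $y\notin S_\omega O$, and a short case distinction (according to whether $x\notin O(Rr_\omega)$, or $x\in S_\sigma O$ with $\sigma\in\Sigma(Rr_\omega)$ incomparable to $\omega$, or with $\sigma$ an extension of $\omega$) gives $\dist(x,O(Rr_\omega)^c)\le\delta$. (Your lower-bound collar $(S_\omega F)_\eps\setminus S_\omega O\subseteq(\partial S_\omega O)_\eps$ is fine as a set inclusion.) In short, your decomposition can be repaired, but the repairs --- using the inner parallel set $(S_\omega O)_{-\delta}$ instead of $(S_\omega F)_\eps$ as the locality region, restricting to the support of the measures before invoking Lemma~\ref{key-lem}, and a second application of Lemma~\ref{key-lem} at scale $\eps r_\omega^{-1}$ for the lower bound --- amount to reconstructing the paper's own argument.
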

The proof is analogous to the one of \cite[Lemma~6.2.1, p.56]{winter} and therefore omitted. The main idea for the first estimate \eqref{upbound} is to
decompose $(S_\omega O)_\delta=(S_\omega O)_{-\delta}\cup(\bd S_\omega O)_\delta$ and apply the locality to the first set and Lemma~\ref{key-lem} to the second one. A similar argument works for \eqref{lowbound}.

In the sequel, we will write
$
\nu(f):=\int_{\R^d} f d\nu
$
for the integral of a function $f$ with respect to a (signed) measure $\nu$.
For $\omega \in\Sigma^*$ and $\delta>0$, let $f^\omega _\delta: \R^d\to[0,1]$ be a continuous function such that
\begin{equation}\label{fdeltadef}
f^\omega_\delta(x)=1 \quad \mbox{ for } x\in S_\omega O \quad \mbox{ and }\quad f^\omega_\delta(x)=0 \quad \mbox{ for } x \mbox{ outside } (S_\omega O)_\delta.
\end{equation}
For simplicity, assume that $f^\omega_\delta\le f^\omega_{\delta'}$ for all $\delta<\delta'$.
Obviously, $f^\omega_\delta$ has compact support and satisfies $\ind{S_\omega O}\le f^\omega_\delta \le \ind{(S_\omega O)_\delta}$. Moreover, as $\delta\searrow 0$, the functions $f^\omega_\delta$ converge (pointwise) to $\ind{S_\omega O}$, implying in particular the convergence of the integrals $\nu(f^\omega_\delta)\to \nu(\ind{S_\omega O})=\nu(S_\omega O)$ with respect to any signed Radon measure $\nu$.

Using Lemma~\ref{curvbound}, we derive some bounds for the integrals $\nu_{k,\eps}(f^\omega_\delta)$ and $\overline{\nu}_{k,\eps}(f^\omega_\delta)$. This will be an essential ingredient for the computation of the (essential) weak limits of the measures $\nu_{k,\eps}$ and $\overline{\nu}_{k,\eps}$.

\begin{lem}\label{curvbound2}
Let $k\in\{0,1,\ldots,d\}$ and let $F$ be a self-similar set in $\R^d$ satisfying OSC and conditions (i) and (ii) of Theorem~\ref{thm:global}.
Let $\omega\in\Sigma^*$. Let $c=c(\omega)$ be the constant of Lemma~\ref{curvbound} and $M$ the one in Theorem~\ref{unifbound}.
\begin{enumerate}
\item[(i)] For all $\eps\in\Reg(F)$ and $\delta$ such that $0<\eps\le\delta\le \rho r_\omega$, we have
\begin{equation*}
|\nu_{k,\eps}(f^\omega_\delta)-r_\omega^s \nu_{k,\eps r_\omega^{-1}}(\R^d)|\le 2 c \delta^\gamma.
\end{equation*}
\item[(ii)] For all $\eps$ and $\delta$ such that $0<\eps\le\delta\le \rho r_\omega$, we have
\begin{equation*}
 |\overline{\nu}_{k,\eps}(f^\omega_\delta)-r_\omega^s \overline{\nu}_{k,\eps r_\omega^{-1}}(\R^d)|\le 2 c
  \delta^\gamma + \frac{\ln\delta}{\ln\eps}2(c\delta^\gamma+M).
\end{equation*}
\end{enumerate}
\end{lem}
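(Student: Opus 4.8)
The plan is to establish the pointwise estimate (i) first and then to integrate it over $\tilde\eps$ to obtain the averaged estimate (ii), absorbing the unavoidable boundary discrepancies into the uniform bound $M$ of Theorem~\ref{unifbound}.

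For (i) I would reduce the statement about the test function $f^\omega_\delta$ to the two-sided estimates of Lemma~\ref{curvbound}. Since $\ind{S_\omega O}\le f^\omega_\delta\le \ind{(S_\omega O)_\delta}$ and $f^\omega_\delta\ge 0$, every \emph{positive} measure $\mu$ satisfies $\mu(S_\omega O)\le \mu(f^\omega_\delta)\le \mu((S_\omega O)_\delta)$; applying this to $C_k^+(F_\eps,\mydot)$ and $C_k^-(F_\eps,\mydot)$ and inserting the upper bound \eqref{upbound} and the lower bound \eqref{lowbound} of Lemma~\ref{curvbound} (once with $\bullet=+$ and once with $\bullet=-$) yields
\[
\bigl|C_k^\bullet(F_\eps,f^\omega_\delta)-r_\omega^k C_k^\bullet(F_{\eps r_\omega^{-1}})\bigr|\le c\,\eps^{k-s}\delta^\gamma ,\qquad \bullet\in\{+,-\}.
\]
Subtracting the two cases (so that $C_k=C_k^+-C_k^-$) and using the triangle inequality gives the same bound for the signed measure $C_k$ with $c$ replaced by $2c$. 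Multiplying by $\eps^{s-k}$ and using the scaling identity $\eps^{s-k}r_\omega^k C_k(F_{\eps r_\omega^{-1}})=r_\omega^s(\eps r_\omega^{-1})^{s-k}C_k(F_{\eps r_\omega^{-1}})=r_\omega^s\,\nu_{k,\eps r_\omega^{-1}}(\R^d)$ turns this into the assertion of (i). Regularity of $\eps r_\omega^{-1}$, guaranteed by $\eps\in\Reg(F)$, ensures every term is defined.

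For (ii) I would expand the averaged measure as $\overline\nu_{k,\eps}(f^\omega_\delta)=|\ln\eps|^{-1}\int_\eps^1\nu_{k,\tilde\eps}(f^\omega_\delta)\,\tilde\eps^{-1}\,d\tilde\eps$ and, via the substitution $u=\tilde\eps r_\omega^{-1}$, rewrite the target as
\[
r_\omega^s\overline\nu_{k,\eps r_\omega^{-1}}(\R^d)=\frac1{|\ln(\eps r_\omega^{-1})|}\int_\eps^{r_\omega} r_\omega^s\nu_{k,\tilde\eps r_\omega^{-1}}(\R^d)\,\frac{d\tilde\eps}{\tilde\eps},
\]
so that both objects are logarithmic averages of the \emph{same} integrand appearing in (i). Splitting $\int_\eps^1=\int_\eps^\delta+\int_\delta^1$, on $[\eps,\delta]$ the pointwise estimate (i) applies with error $2c\delta^\gamma$, and since $|\ln\eps|^{-1}\int_\eps^\delta \tilde\eps^{-1}\,d\tilde\eps\le 1$ this contributes the first summand $2c\delta^\gamma$. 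The remaining contributions---the tail $\int_\delta^1\nu_{k,\tilde\eps}(f^\omega_\delta)$, the tail $\int_\delta^{r_\omega}$ of the target, and the mismatch between the normalizing constants $|\ln\eps|$ and $|\ln(\eps r_\omega^{-1})|=|\ln\eps|-|\ln r_\omega|$---are each controlled by the uniform bound $\nu_{k,u}^{\var}(\R^d)\le M$ of Theorem~\ref{unifbound}, which applies because every argument $u$ occurring lies in $(0,1]\cap\Reg(F)$ (the excluded null set leaves the integrals unchanged). Each such residual range has logarithmic length at most $|\ln\delta|$, and $|\ln r_\omega|\le|\ln\delta|$ because $\delta\le\rho r_\omega$; dividing by $|\ln\eps|$ therefore produces the factor $\ln\delta/\ln\eps=|\ln\delta|/|\ln\eps|$ and collects into the second summand $\frac{\ln\delta}{\ln\eps}2(c\delta^\gamma+M)$.

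The main obstacle is exactly this last bookkeeping in (ii): the two averages carry \emph{different} normalizations ($|\ln\eps|$ versus $|\ln(\eps r_\omega^{-1})|$) and run over \emph{different} ranges ($[\eps,1]$ versus $[\eps,r_\omega]$), and these discrepancies do not cancel; this is precisely what forces the total-variation bound $M$ into the estimate. The two points requiring care are to keep every argument of $\nu_{k,\mydot}$ inside $(0,1]\cap\Reg(F)$ so that $M$ is applicable, and to convert the various $|\ln\eps|^{-1}$-weighted residual integrals into the single factor $\ln\delta/\ln\eps$ by means of $|\ln r_\omega|\le|\ln\delta|\le|\ln\eps|$.
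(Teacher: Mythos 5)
Part (i) of your proposal is correct, and it is the intended argument (the paper omits the proof, deferring to \cite[Lemmas~6.2.2 and 6.2.3]{winter}): sandwiching $f^\omega_\delta$ between $\ind{S_\omega O}$ and $\ind{(S_\omega O)_\delta}$, applying \eqref{upbound} and \eqref{lowbound} to the positive measures $C_k^\pm(F_\eps,\mydot)$, subtracting, and rescaling by $\eps^{D-k}$ gives exactly the claimed bound $2c\delta^\gamma$. Your overall strategy for (ii) -- split the average at $\delta$, use (i) on $[\eps,\delta]$, and control the tails and the normalization mismatch by the uniform bound $M$ -- is also the right route. However, the final accounting in (ii) does not establish the stated inequality. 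You identify \emph{three} residual terms (tail of the source over $[\delta,1]$, tail of the target over $[\delta,r_\omega]$, normalization mismatch) and bound \emph{each} by $M\frac{\ln\delta}{\ln\eps}$; summing, your argument proves only
\begin{equation*}
|\overline{\nu}_{k,\eps}(f^\omega_\delta)-r_\omega^D \overline{\nu}_{k,\eps r_\omega^{-1}}(\R^d)|\le 2c\delta^\gamma + 3M\,\frac{\ln\delta}{\ln\eps}\,,
\end{equation*}
and $3M\le 2(c\delta^\gamma+M)$ holds only when $M\le 2c\delta^\gamma$, which fails in general: $c\delta^\gamma\to 0$ as $\delta\to 0$ while $M$ is a fixed constant. (Your weaker bound would still suffice for the application in Section~5, where $\eps\to 0$ is taken before $\delta\to 0$, but it does not imply the inequality of the lemma.)

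The repair is to not give the last two terms the full budget $|\ln\delta|$ each: their logarithmic lengths sum to exactly $|\ln\delta|$. With $L:=|\ln\eps|$, $L':=|\ln(\eps r_\omega^{-1})|=L-|\ln r_\omega|$, $a(\tilde\eps):=\nu_{k,\tilde\eps}(f^\omega_\delta)$ and $b(\tilde\eps):=r_\omega^D\nu_{k,\tilde\eps r_\omega^{-1}}(\R^d)$, decompose
\begin{equation*}
\frac1L\int_\eps^1 a\,\frac{d\tilde\eps}{\tilde\eps} - \frac1{L'}\int_\eps^{r_\omega} b\,\frac{d\tilde\eps}{\tilde\eps}
= \frac1L\int_\eps^\delta (a-b)\,\frac{d\tilde\eps}{\tilde\eps} + \frac1L\int_\delta^1 a\,\frac{d\tilde\eps}{\tilde\eps}
- \frac1L\int_\delta^{r_\omega} b\,\frac{d\tilde\eps}{\tilde\eps} - \Bigl(\frac1{L'}-\frac1L\Bigr)\int_\eps^{r_\omega} b\,\frac{d\tilde\eps}{\tilde\eps}\,.
\end{equation*}
The first two terms are bounded by $2c\delta^\gamma$ and $M\frac{|\ln\delta|}{L}$ as in your argument. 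The third is at most $M\frac{|\ln\delta|-|\ln r_\omega|}{L}$, since its range has logarithmic length $|\ln\delta|-|\ln r_\omega|$ (here one may assume $\rho\le 1$, hence $\delta\le r_\omega$; this is also needed for your own step $|\ln r_\omega|\le|\ln\delta|$, and is harmless since Lemma~\ref{key-lem} only improves when $\rho$ is decreased). For the fourth, note $\int_\eps^{r_\omega}\frac{d\tilde\eps}{\tilde\eps}=L'$ \emph{exactly}, so it is at most $M\bigl(\frac1{L'}-\frac1L\bigr)L'=M\frac{|\ln r_\omega|}{L}$. The third and fourth terms therefore contribute $M\frac{|\ln\delta|}{L}$ \emph{together}, giving the total $2c\delta^\gamma+2M\frac{\ln\delta}{\ln\eps}$, which lies within the claimed bound $2c\delta^\gamma+\frac{\ln\delta}{\ln\eps}2(c\delta^\gamma+M)$. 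The remaining points of your write-up (almost-everywhere applicability of (i) and of Theorem~\ref{unifbound} on the respective ranges, regularity of $\eps r_\omega^{-1}$ for $\eps\in\Reg(F)$) are fine.
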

We omit the proof, since the arguments are analogous to those in the proofs of \cite[Lemmas~6.2.2 and 6.2.3]{winter}. The restriction to regular $\eps$  in (i) is required to ensure that Lemma~\ref{curvbound} can be applied (instead of \cite[Lemma~6.2.1]{winter} used in the proof of Lemma~6.2.2). Because of the averaging, in (ii) the regularity is not required, since by condition (i) in Theorem~\ref{mainthm} the set $\mathcal{N}^*$ is a null set.

\section{Proof of Theorem~\ref{mainthm}}\label{sec:proof}

First we recall the following uniqueness theorem for measures. It is well known for non-negative measures (see e.g.~\cite[p.51]{jacobs}) and easily generalized to signed measures (cf.~\cite[p.55]{winter}).

\begin{thm}\label{eindeut}
Let $\mu$ and $\nu$ be totally finite signed measures on the Borel $\sigma$-algebra $\mathfrak{B}^d$ of $\R^d$, and let ${\cal A}$ an intersection stable generator of $\mathfrak{B}^d$ such that $\mu(A)=\nu(A)$ for each set $A\in {\cal A}$. Then $\mu=\nu$.
\end{thm}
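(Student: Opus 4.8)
The plan is to obtain the signed case directly from the cited non-negative version by applying it to the Jordan--Hahn decomposition of the difference $\lambda:=\mu-\nu$. Since $\mu$ and $\nu$ are totally finite, $\lambda$ is a totally finite signed measure, and the hypothesis says exactly that $\lambda(A)=0$ for every $A\in{\cal A}$; moreover the desired conclusion $\mu=\nu$ is equivalent to $\lambda\equiv 0$ on $\mathfrak{B}^d$. I would write $\lambda=\lambda^+-\lambda^-$ for the Jordan decomposition and fix a Hahn decomposition $\R^d=P\cup N$ (disjoint, with $\lambda^+$ concentrated on $P$ and $\lambda^-$ on $N$), so that $\lambda^+$ and $\lambda^-$ are totally finite \emph{non-negative} measures to which the known result is applicable.

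The key step is that $\lambda^+$ and $\lambda^-$ already agree on the generator: for $A\in{\cal A}$ we have $\lambda^+(A)-\lambda^-(A)=\lambda(A)=0$, hence $\lambda^+(A)=\lambda^-(A)$. Applying the non-negative uniqueness theorem (with the same intersection-stable generator ${\cal A}$ of $\mathfrak{B}^d$) to the pair $\lambda^+,\lambda^-$ then yields $\lambda^+=\lambda^-$ on all of $\mathfrak{B}^d$. The Hahn sets finish the argument at once: evaluating $\lambda^+=\lambda^-$ at $P$ gives $\lambda^+(P)=\lambda^-(P)=0$ (as $\lambda^-$ lives on $N$), so $\lambda^+\equiv 0$; evaluating at $N$ gives $\lambda^-(N)=\lambda^+(N)=0$, so $\lambda^-\equiv 0$. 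Thus $\lambda=\lambda^+-\lambda^-\equiv 0$, i.e.\ $\mu=\nu$.

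The one place that needs care — and the only place where positivity entered the classical proof at all — is the applicability of the non-negative theorem, which (through the underlying $\pi$--$\lambda$ / Dynkin argument) requires not just coincidence on ${\cal A}$ but that the total masses be pinned down, equivalently that $\R^d$ (or a sequence in ${\cal A}$ increasing to $\R^d$) anchor the Dynkin system at the whole space. For $\lambda^+,\lambda^-$ this amounts to $\lambda^+(\R^d)=\lambda^-(\R^d)$, i.e.\ $\lambda(\R^d)=0$, which holds as soon as $\R^d\in{\cal A}$; in the application to Theorem~\ref{mainthm} it holds in any event, since $\mu(\R^d)=C_k^f(F)=C_k^f(F)\,\mu_F(\R^d)=\nu(\R^d)$, $\mu_F$ being a probability measure. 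I regard verifying this anchoring condition, rather than the decomposition itself, as the substantive point.

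Alternatively, and perhaps even more transparently matching the phrase ``easily generalized'', one can rerun the Dynkin argument directly for signed measures without decomposing. The collection ${\cal L}:=\{A\in\mathfrak{B}^d:\mu(A)=\nu(A)\}$ contains the $\pi$-system ${\cal A}$ and is a $\lambda$-system: total finiteness legitimizes $\mu(B\setminus A)=\mu(B)-\mu(A)$ for $A\subseteq B$, and continuity from below of finite signed measures handles increasing unions. The $\pi$--$\lambda$ theorem then gives ${\cal L}=\sigma({\cal A})=\mathfrak{B}^d$, again subject to $\R^d\in{\cal L}$, which is the same total-mass condition as above.
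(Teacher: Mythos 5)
Your proof is correct, and it is essentially the route the paper itself intends: the paper gives no argument of its own for Theorem~\ref{eindeut}, only the citations to \cite[p.~51]{jacobs} for the non-negative case and to \cite[p.~55]{winter} for the ``easy generalization'' to signed measures, and that generalization is exactly the Jordan-decomposition reduction you perform --- whether one Hahn-decomposes the difference $\lambda=\mu-\nu$ as you do, or instead compares the two non-negative measures $\mu^++\nu^-$ and $\nu^++\mu^-$, which agree on ${\cal A}$ by cross-addition, is immaterial.

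The caveat you isolate, however, is not pedantry but a genuine defect of the statement as printed, and your handling of it is the most valuable part of the proposal. Agreement on an intersection-stable generator alone does \emph{not} determine a totally finite measure: on $\R$, the open intervals avoiding the origin (together with $\emptyset$) form an intersection-stable generator of the Borel $\sigma$-algebra, and $\mu=\delta_0$, $\nu=2\delta_0$ agree on it (both vanish there) without being equal. This is precisely why the classical uniqueness theorem carries an anchoring hypothesis --- a sequence $A_n\in{\cal A}$ whose union is the whole space, or equality of total masses --- which the formulation of Theorem~\ref{eindeut} silently drops. As you observe, the application in the proof of Theorem~\ref{mainthm} is unharmed: the total masses of the compared measures both equal $C_k^f(F)$, and in fact the generator ${\cal A}_F$ anchors itself, since it contains $S_1O,\ldots,S_NO$ together with $(\S O)^c=O(R)^c\in{\cal C}_F$, and these finitely many sets cover $\R^d$, so the Dynkin argument can be pinned at the whole space by inclusion--exclusion. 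Thus your proposal, with the anchoring condition made explicit, is both a correct proof and a correct repair of the statement; only your parenthetical suggestion that one might need $\R^d\in{\cal A}$ should be read as one sufficient option among these, not as a requirement.
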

For a self-similar set $F$, let
\[
{\cal A}_F:=\{S_\omega O: \omega \in\Sigma^*\}\,\cup\, {\cal C}_F\, ,
\]
where
\[
{\cal C}_F:=\{ C\in\mathfrak{B}^d: \exists r>0 \mbox{ such that } C\subseteq O(r)^c\} \, .
\]
It is shown in \cite[Lemma~6.1.1]{winter} that the set family ${\cal A}_F$ is an intersection stable generator of $\mathfrak{B}^d$. Therefore, the above uniqueness theorem applies to ${\cal A}_F$. 

Now we are ready to give a proof of Theorem~\ref{mainthm}. We start with the non-lattice case and compute the essential weak limit of the $\nu_{k,\eps}$ in \eqref{eq:mainthm:esswlim}.

Let $F$ be a non-lattice self-similar set satisfying the hypotheses of Theorem~\ref{mainthm} and let $k\in\{0,\ldots,d\}$. According to condition (i) in Theorem~\ref{mainthm}, the set $\mathcal{N}\subset(0,\infty)$ of $\eps$ that are not regular for $F$ is a Lebesgue null set. Recall the definition of $
\mathcal{N}^*$ and $\Reg(F)$ from \eqref{eq:Nstar}.

First observe that the families $\{\nu^+_{k,\eps}: \eps\in(0,1)\setminus \mathcal{N}^*\}$ and $\{\nu^-_{k,\eps}: \eps\in(0,1)\setminus \mathcal{N}^*\}$ are tight. Indeed, by Theorem~\ref{unifbound}, the total masses of these measures are uniformly bounded and their support is contained in $F_1$. Hence, by Prokhorov's Theorem, every sequence in $\{\nu^+_{k,\eps}: \eps\in(0,1)\setminus \mathcal{N}^*\}$ (as well as in $\{\nu^-_{k,\eps}: \eps\in(0,1)\setminus \mathcal{N}^*\}$) has a weakly convergent subsequence. Hence starting from any null sequence $(\eps_n)_{n\in\N}$ we can always find a subsequence, for convenience again denoted by $(\eps_n)_{n\in\N}$, such that both sequences $(\nu^+_{k,\eps_n})_n$ and $(\nu^-_{k,\eps_n})_n$ converge weakly as $n\to\infty$. Let $\nu_k^+$ and $\nu_k^-$, respectively, denote the limit measure.
The weak convergence of the variation measures $\nu^+_{k,\eps_n}$ and $\nu^-_{k,\eps_n}$ implies the convergence of the (signed) measures $\nu_{k,\eps_n}=\nu^+_{k,\eps_n}-\nu^-_{k,\eps_n}$ and the limit measure is $\nu_k:=\nu_k^+ - \nu_k^-$.

A priori, the limit measure $\nu_k$ may depend on the chosen (sub)sequence $(\eps_n)$. However, we are going to show that the limit measure $\nu_k$ always coincides with the measure $\mu_k:=C_k^f(F)\mu_F$, independent of the sequence $(\eps_n)_n$.   The existence of the essential weak limit $\esswlim{\eps\searrow 0} \nu_{k,\eps}$ follows at once.

It remains to show that $\nu_k$ and $\mu_k$ coincide. By Theorem~\ref{eindeut}, it is enough to compare the values $\nu_k(A)$ and $\mu_k(A)$ for the sets $A$ of the family ${\cal A}_F$.
Recall that, for $\omega \in\Sigma^*$,
$$
\mu_k(S_\omega O)=C_k^f(F)\mu_F(S_\omega O)=C_k^f(F)\mu_F(S_\omega F)=C_k^f(F)r_\omega ^D.
$$
Moreover, for $C\in{\cal C}_F$, there is an $r>0$ such that $C\subseteq O(r)$ and hence
$$
\mu_k(C)=C_k^f(F)\mu_F(C)\le C_k^f(F)\mu_F(O(r)^c)=0.
$$
Therefore, it suffices to show that, for all $\omega \in{\Sigma^*}$,
\begin{equation}\label{nu-eqn}
\nu_k(S_\omega O)=C_k (F) r_\omega ^D,
\end{equation}
and for all $C\in{\cal C}_F$
\begin{equation}\label{nu-eqnC}
\nu_k(C)=0.
\end{equation}

\begin{proof}[Proof of \eqref{nu-eqn}]
Fix $\omega \in\Sigma^*$ and set $r:=r_\omega $. We approximate the measure of $S_\omega O$ by the integrals of the functions $f^\omega _\delta$ (defined in \eqref{fdeltadef}) and use Lemma~\ref{curvbound2}.
  Since the sequence $(\eps_n)$ avoids the set $\mathcal{N}^*$, by Lemma~\ref{curvbound2}(i), we have for all $n$ and $\delta$ such that $\eps_n\le\delta\le\rho r$
\begin{equation}
|\nu_{k,\eps_n}(f^\omega _\delta)-r_\omega ^D \nu_{k,\eps_n r_\omega ^{-1}}(\R^d)|\le 2 c \delta^\gamma.
\end{equation}
Keeping $\delta$ fixed and letting $n\to\infty$, the weak convergence implies $\nu_{k,\eps_n}(f^\omega _\delta)\to\nu_{k}(f^\omega _\delta)$, since $f^\omega _\delta$ is continuous. Moreover,
$\nu_{k,\eps_n r_\omega ^{-1}}(\R^d)=(\eps_n r_\omega ^{-1})^{D-k} C_k(F_{\eps_n r_\omega ^{-1}})\to C_k^f(F)$, by Theorem~\ref{thm:global}. Hence the above inequality yields
\begin{equation}
|\nu_{k}(f^\omega _\delta)-r_\omega ^D C_k (F)|\le 2 c \delta^\gamma
\end{equation}
for each $\delta\le\rho r$.
Letting now $\delta\to 0$, the integrals $\nu_{k}(f^\omega _\delta)$ converge to $\nu_{k}(\ind{S_\omega O})=\nu_k(S_\omega O)$, while the right hand side of the inequality vanishes.
Therefore, $|\nu_k(S_\omega O)-r_\omega ^D C_k (F)|\le 0$ which implies $\nu_k(S_\omega O)=r_\omega ^D C_k (F)$, as claimed in (\ref{nu-eqn}).
\end{proof}

\begin{proof}[Proof of \eqref{nu-eqnC}]
Fix $r>0$. It suffices to show $\nu_k^\pm(O(r)^c)=0$, since this immediately implies that $\nu_k(C)=\nu_k^+(C)-\nu_k^-(C)=0$ for all $C\subseteq O(r)^c$.
Similarly as before we approximate the indicator function of $O(r)^c$ by continuous functions.
For $\delta>0$, let $g_\delta :\R^d\to[0,1]$ be a continuous function such that
\begin{equation}\label{gdelta}
g_\delta(x)=1 \quad\mbox{ for }\quad x\in O(r)^c \quad\mbox{ and }\quad  g_\delta(x)=0 \quad\mbox{ for }\quad x\in (O(r))_{-\delta}.
\end{equation}
Since $g_\delta\le \ind{(O(r)^c)_\delta}$,
by Lemma~\ref{key-lem}, for all $\eps_n\le\delta\le\rho r$,
\begin{equation}\label{eq:g_delta}
\nu_{k,\eps_n}^\pm(g_\delta)\le c \delta^\gamma.
\end{equation}
Keeping $\delta$ fixed and letting $n\to\infty$, the weak convergence
implies that $\nu_{k,\eps_n}^{\pm}(g_\delta)\to \nu_k^{\pm}(g_\delta)$ while the right hand side remains unchanged.
Letting now $\delta\to 0$, the functions $g_\delta$ converge pointwise to $\ind{O(r)^c}$ and thus
$\nu_k^{\pm}(g_\delta)\to \nu_k^{\pm}(O(r)^c)$, while $c \delta^\gamma$ vanishes. Hence $\nu_k^{\pm}(O(r)^c)=0$, completing the proof of (\ref{nu-eqnC}).
\end{proof}

We have now completed the proof of \eqref{eq:mainthm:esswlim} in Theorem~\ref{mainthm}. It remains to provide a proof of \eqref{eq:mainthm:wlim}.
However, the arguments are now almost the same as in the proof of \eqref{eq:mainthm:esswlim}. Let $F$ be an arbitrary self-similar set satisfying the hypotheses in Theorem~\ref{mainthm}.
It is easily seen that the families $\{\overline{\nu}^+_{k,\eps}: \eps\in(0,1)\}$ and $\{\overline{\nu}^-_{k,\eps}: \eps\in(0,1)\}$ are tight.
Hence, by Prokhorov's Theorem, they are relatively compact. Let $\{\eps_n\}$ be a null sequence such that
\[
\wlim{n\to\infty}\overline{\nu}_{k,\eps_{n}}^+ = \overline{\nu}_k^+ \quad\mbox{ and }\quad \wlim{n\to\infty} \overline{\nu}_{k,\eps_{n}}^-= \overline{\nu}_k^-,
\]
for some limit measures $\overline{\nu}_k^+$ and $\overline{\nu}_k^-$ (which depend on the sequence $(\eps_n)$). Then $\wlim{n\to\infty}\overline{\nu}_{k,\eps_{n}}= \overline{\nu}_k:=\overline{\nu}_k^+ -\overline{\nu}_k^-$.
We have to show that $\overline{\nu}_k$ coincides with $\mu_k:=C_k^f(F)~\mu_F$, which implies the independence of $\overline{\nu}_k$ from the sequence $(\eps_n)$ and thus the convergence in \eqref{eq:mainthm:wlim}.
Employing again the set family ${\cal A}_F$ and Theorem~\ref{eindeut}, it remains to show that for all $\omega \in{\Sigma^*}$
\begin{equation}\label{avnu-eqn}
\overline{\nu}_k(S_\omega O)=C_k^f(F) r_\omega ^s,
\end{equation}
and for all $C\in{\cal C}_F$
\begin{equation}\label{avnu-eqnC}
\overline{\nu}_k(C)=0.
\end{equation}
The proofs of \eqref{avnu-eqn} and \eqref{avnu-eqnC} are completely analogous to the proofs of \eqref{nu-eqn} and \eqref{nu-eqnC} above.
For \eqref{avnu-eqn} use Lemma~\ref{curvbound2} (ii), and for \eqref{avnu-eqnC}, note that
for all $\eps\le\delta\le\rho r$,
\begin{equation*}
\overline{\nu}_{k,\eps}^{\pm}(g_\delta)\le c \delta^\gamma + \frac{\ln\delta}{\ln\eps}(c\delta^\gamma+M).
\end{equation*}
This is easily derived from \eqref{eq:g_delta} (which holds for all $\eps$ not just $\eps_n$) and Theorem~\ref{unifbound}.

\end{document}